\DeclareMathOperator{\Ric}{Ric}
\DeclareMathOperator{\Hess}{Hess}
\DeclareMathOperator{\trace}{trace}
\let\oldref\ref
\renewcommand{\ref}[1]{(\oldref{#1})}
\newtheorem{theorem}{Theorem}[section]
\newtheorem{lemma}[theorem]{Lemma}
\newtheorem{proposition}[theorem]{proposition}
\theoremstyle{definition}
\newtheorem{example}{Example}
\newtheorem{corollary}{Corollary}
\newtheorem{definition}{Definition}
\title{$\eta-$Ricci solitons on contact pseudo-metric manifolds}
\author{Eftekhar Asgharzadeh\thanks{Email:ef.asgharzadeh@gmail.com} and Morteza Faghfouri\thanks{Email:faghfouri@tabrizu.ac.ir}\thanks{Department of Pure Mathematics, Faculty of Mathematical Sciences, University of Tabriz, Tabriz, Iran.}}
\begin{document}
\maketitle

\begin{abstract}
  In this paper, we prove that a Sasakian pseudo-metric manifold which admits an $\eta-$Ricci soliton is an $\eta-$Einstein manifold, and if the potential vector field of the $\eta-$Ricci soliton is not a Killing vector field then the manifold is $\mathcal{D}-$homothetically fixed, and the vector field leaves the structure tensor field invariant. Next, we prove that  a $K-$contact pseudo-metric manifold with a gradient $\eta-$Ricci soliton metric is $\eta-$Einstein. Moreover, we study  contact pseudo-metric manifolds  admitting  an $\eta-$Ricci soliton with a potential vector field point wise colinear with the Reeb vector field. Finally, we study gradient $\eta-$Ricci solitons on $(\kappa, \mu)$-contact pseudo-metric manifolds.
\end{abstract}

\section{Introduction}
A Ricci soliton is a natural generalization of an Einstein metric, which was introduced by Hamilton \cite{hamilton1988ricci} as the fixed point of the Hamilton's Ricci flow $\frac{\partial}{\partial t} g=-2\Ric$. The Ricci flow is a nonlinear diffusion equation analogue of the heat equation  for metrics. A Ricci soliton $(g, V, \lambda)$ on the pseudo-Riemannian manifold $(M, g)$ is defined by the following equation
\begin{equation*}
  \mathsterling_V g+2\Ric+2\lambda g=0,
\end{equation*}
where $\mathsterling_V$ is the Lie derivative along the potential vector field $V$, and $\lambda$ is a constant real number. The Ricci soliton is called shrinking, steady, expanding if $\lambda<0$, $\lambda =0$ and $\lambda >0$, respectively. If $V=Df$, where $Df$ is the gradient of the smooth function $f$, then the Ricci soliton is called a gradient Ricci soliton. Ricci solitons have been studied in many different contexts (see \cite{chodosh2016rotational, bagewadi2012ricci, cualin2010eisenhart, sharma2008certain, futaki2009transverse, nagaraja2012ricci, bejan2014second, ghosh2014sasakian}). Also, they are  interests of physicists because of their relations to string theory \cite{akbar2009ricci, kholodenko2008towards}, and physicists refer to Ricci solitons as quasi-Einstein metrics \cite{friedan1980nonlinear}.

The $\eta-$Ricci soliton notion, as a generalization of a Ricci soliton, was introduced by Cho and Kimura \cite{cho2009ricci}. An $\eta-$Ricci soliton on a manifold $M$ is a tuple $(g, V, \lambda, \mu)$, where $g$ is a pseudo-Riemannian metric, $V$ is the potential vector field, and $\lambda, \mu$ are constant real numbers satisfying
\begin{equation}
     \mathsterling_V g+ 2 \Ric+2\lambda g+2 \mu \eta\otimes\eta=0, \label{pre:etaRicciSolitonEq}
\end{equation}
where $\eta$ is a $1-$form on $M$. Moreover, if  $V=Df$, the $\eta-$Ricci soliton is called a gradient $\eta-$Ricci soliton and Eq.\ref{pre:etaRicciSolitonEq} becomes
\begin{equation}
  \Hess f+\Ric+\lambda g+\mu\eta\otimes\eta=0.\label{pre:gradientEtaRicciSolitonEq}
\end{equation}
The $\eta-$Ricci solitons have been studied in many different settings, Blaga  studied $\eta-$Ricci solitons on para-Kenmotsu  \cite{blaga2015eta} and  Lorentzian para-Sasakian manifolds \cite{blaga2016}. Devaraja and Venkatesha studied $\eta-$Ricci solitons on para-Sasakian manifolds \cite{naik2019eta}, etc.

Contact geometry is an odd-dimensional analogue of the symplectic geometry and has been studied in many different contexts (particularly) those related to physics. It has been used as a proper framework for classical thermodynamics \cite{bravetti2015contact, van2018geometry}, and as a geometrical approach to magnetic field \cite{cabrerizo2009contact}. Also, it was studied in relation with the Yang-Mills theory \cite{kallen2012twisted}, quantum mechanics \cite{herczeg2018contact}, gravitational waves \cite{low1998stable}, etc. Studying contact structures with pseudo-Riemannian metrics was started by Takahashi in \cite{takahashi1969sasakian}, but he just studied the Sasakian case. Recently,  Calvaruso and Perrone \cite{calvaruso2010contact} have studied a contact pseudo-metric manifold in the general case.  Ghaffarzadeh and second author studied nullity conditions on the contact pseudo-metric manifolds  and have introduced the  ``$(\kappa, \mu)-$contact pseudo-metric manifold" notion \cite{ghaffarzadeh}. The relevance for the  general relativity of contact pseudo-metric manifolds was studied in \cite{duggal1990space}. All of these applications have motivated us to study $\eta-$Ricci solitons in the contact pseudo-Riemannian settings.

The present paper has been organized as follows. In Section 2, we recalled the contact pseudo-metric manifold notion and proved some lemmas that are  used in the next sections. In Section 3, we studied $\eta-$Ricci solitons on Sasakian pseudo-metric manifolds and showed that a Sasakian pseudo-metric manifold, which admits an $\eta-$Ricci soliton, is an $\eta-$Einstein manifold and if the potential vector field of the $\eta-$Ricci soliton is not  a Killing vector field, then the manifold is $\mathcal{D}-$homothetically fixed, and presented an  example for it. Moreover, we showed  a $K-$contact pseudo-metric manifold  which admits a gradient $\eta-$Ricci soliton is an $\eta-$Einstein manifold. Also,  we studied  an $\eta-$Ricci soliton that has a  potential vector field colinear to  the Reeb vector field on a contact pseudo-metric manifold and  showed that the manifold is  $K-$contact. In the last section, we studied gradient $\eta-$Ricci solitons on a $(\kappa, \mu)-$contact pseudo-metric manifold and obtained some conditions on the curvature tensor of the manifold.

\section{Preliminaries}\label{preliminaries}
In this section, we recall some definitions and results needed in the rest of the paper.

A $(2n+1)-$dimensional manifold $M$ is called an almost contact pseudo-metric manifold, if there exists an almost  contact pseudo-metric structure $(\varphi, \xi, \eta, g)$ on $M$, where $\varphi, \xi, \eta,g$ are a $(1, 1)-$tensor field, a vector field, a $1$-form and a compatible pseudo-Riemannian metric, respectively, which satisfy the following equations
\begin{gather}
  \eta(\xi) = 1,\quad\varphi^2(X)=-X+\eta(X)\xi,\label{pre:basicPro4}\\
  g(\varphi X, \varphi Y)=g(X, Y)-\epsilon\eta(X)\eta(Y),
\end{gather}
where $\epsilon=\pm 1$, and $X, Y$ are arbitrary vector fields. Using the above equations, we have
\begin{gather*}
  \varphi\xi=0,\quad\eta\circ\varphi=0,\\
  \eta(X) = \epsilon g(\xi, X),\quad g(\varphi X, Y)=-g(X,\varphi Y)
\end{gather*}
and especially $g(\xi, \xi)=\epsilon$. Notice that the signature of the metric $g$ is $(2p+1, 2n-2p)$ if $\xi$ is a spacelike vector field $(g(\xi, \xi)>0)$, and is $(2p, 2n-2p+1)$ if $\xi$ is a timelike vector field ($g(\xi, \xi)<0$).

The fundamental $2-$form $\Phi$ of an almost contact pseudo-metric manifold $(M, \varphi, \xi, \eta, g)$ is defined as $\Phi(X, Y) = g(X, \varphi Y)$, where $X, Y\in\Gamma(M)$. If
\begin{equation}
  g(X, \varphi Y) = (d\eta)(X, Y),\label{pre:basicPro1}
\end{equation}
then $\eta$ is a contact form, $(\varphi, \xi, \eta, g)$ is a contact pseudo-metric structure and $M$ is called a contact pseudo-metric manifold.

Throughout this paper, we use $R(X, Y)=[\nabla_X, \nabla_Y]-\nabla_{[X, Y]}$, where $X, Y\in\Gamma(M),$ as the Riemannian curvature tensor definition. In a contact pseudo-metric manifold $(M, \varphi, \xi, \eta, g)$ the $(1, 1)-$tensor field $\ell$ and $h$ are defined by
\begin{equation*}
  \ell X=R(X, \xi)\xi,\quad hX=\frac{1}{2}(\mathsterling_\xi\varphi)X.
\end{equation*}
Also, notice the $\ell$ and $h$ are self-adjoint operators. In the contact pseudo-metric manifold $(M, \varphi, \xi, \eta, g)$, we have the following equations \cite{calvaruso2010contact,perrone2014curvature}
\begin{gather}
  \trace(h)=\trace(h\varphi)=0,\\
  \eta\circ h=0, \quad \ell\xi=0, \\
  h\varphi=-\varphi h,\quad h\xi=0,\\
  \nabla_\xi\varphi=0,\quad\\
  \nabla_X\xi=-\epsilon\varphi X-\varphi hX,\label{pre:basicPro3}\\
  \Ric(\xi, \xi)=2n-trh^2,\label{pre:basicPro2}
\end{gather}
where $X$ is an arbitrary vector field.

A contact pseudo-metric manifold $(M, \varphi, \xi, \eta, g)$ is a $K-$contact pseudo-metric manifold if $\xi$ is a Killing vector field or equivalently $h=0$. So, we have the following equations
\begin{gather}
  Q\xi=2n\epsilon\xi,\label{revBasic:pro1}\\
  \nabla_X\xi = -\epsilon\varphi X,\label{pre:kContactFormula1}
\end{gather}
where $Q$ is the Ricci operator of the metric $g$ and $X\in\Gamma(M).$
\begin{lemma}\label{pre:kContactCovDiffLemma}
  Let $(M, \varphi, \xi, \eta, g)$ be a $(2n+1)-$dimensional $K-$contact pseudo-metric manifold, then
  \begin{gather}
    (\nabla_XQ)\xi=-2n\varphi X+\epsilon Q\varphi X,\label{pre:kContactCovDiffX}\\
    (\nabla_\xi Q)X=\epsilon(Q\varphi-\varphi Q)X,\label{pre:kContactCovDiffXi}
  \end{gather}
  where $X$ is an arbitrary vector field.
\end{lemma}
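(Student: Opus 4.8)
The plan is to treat the two identities separately: \ref{pre:kContactCovDiffX} yields to a direct computation, while \ref{pre:kContactCovDiffXi} is best obtained by exploiting that $\xi$ is a Killing vector field.

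For the first identity I would simply unfold the definition of the covariant derivative of the $(1,1)$-tensor $Q$, namely $(\nabla_X Q)\xi = \nabla_X(Q\xi) - Q(\nabla_X\xi)$. Substituting $Q\xi = 2n\epsilon\xi$ from \ref{revBasic:pro1} and $\nabla_X\xi = -\epsilon\varphi X$ from \ref{pre:kContactFormula1}, and using $\epsilon^2 = 1$, the first term becomes $2n\epsilon(-\epsilon\varphi X) = -2n\varphi X$ and the second becomes $Q(-\epsilon\varphi X) = -\epsilon Q\varphi X$. Subtracting gives exactly $-2n\varphi X + \epsilon Q\varphi X$, so this step is routine.

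The key observation for the second identity is that in a $K$-contact pseudo-metric manifold $\xi$ is Killing, so its (local) flow is by isometries and therefore leaves both $g$ and the Ricci tensor invariant; since $Q$ is recovered from $\Ric$ and $g$ by raising an index, this forces $\mathsterling_\xi Q = 0$. I would then convert this Lie-derivative statement into covariant derivatives through the general identity $(\mathsterling_\xi Q)X = (\nabla_\xi Q)X - \nabla_{QX}\xi + Q(\nabla_X\xi)$, which follows from writing each Lie bracket as $\nabla_\xi(\cdot) - \nabla_{(\cdot)}\xi$ and using that $\nabla$ is torsion-free. Setting the left-hand side to zero gives $(\nabla_\xi Q)X = \nabla_{QX}\xi - Q(\nabla_X\xi)$, and substituting $\nabla_Y\xi = -\epsilon\varphi Y$ once more yields $-\epsilon\varphi QX + \epsilon Q\varphi X = \epsilon(Q\varphi - \varphi Q)X$, as claimed.

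The main obstacle here is conceptual rather than computational: it is the justification of $\mathsterling_\xi Q = 0$ and keeping the signs straight in the Lie-to-covariant conversion. If one prefers to avoid invoking invariance under the isometry flow, the same identity can instead be extracted from the contracted second Bianchi identity together with the first identity \ref{pre:kContactCovDiffX}; I expect that route to be longer, since it requires analyzing the divergence term $\sum_i \epsilon_i (\nabla_{e_i}R)(\xi, X)e_i$ in an orthonormal frame, so I would keep the Killing-field argument as the primary approach.
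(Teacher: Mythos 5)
Your proposal is correct and follows essentially the same route as the paper: the first identity by differentiating $Q\xi=2n\epsilon\xi$ and substituting $\nabla_X\xi=-\epsilon\varphi X$, and the second by using that $\xi$ Killing forces the Lie derivative of the Ricci data along $\xi$ to vanish, then converting to covariant derivatives. The only cosmetic difference is that you work with $\mathsterling_\xi Q=0$ for the $(1,1)$-operator while the paper writes $\mathsterling_\xi\Ric=0$ for the $(0,2)$-tensor and inserts $\nabla_Y\xi$; these are equivalent by non-degeneracy of $g$, and your sign bookkeeping checks out.
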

\begin{proof}
  First, differentiating \ref{revBasic:pro1} along an arbitrary vector field $X$ and using \ref{pre:kContactFormula1}, we obtain \ref{pre:kContactCovDiffX}. Next Lie differentiating $\Ric$, along $\xi$, we find
  \begin{equation*}
    (\mathsterling_\xi \Ric)(X, Y) = g((\nabla_\xi Q)X+ Q(\nabla_x\xi),Y)+g(QX,\nabla_Y\xi),
  \end{equation*}
  where $X, Y\in\Gamma(M)$. Because $\xi$ is a Killing vector field, so $\mathsterling_\xi \Ric=0$, using this and \ref{pre:kContactFormula1} in the above equation give \ref{pre:kContactCovDiffXi}, and it completes the proof.
\end{proof}

An almost contact pseudo-metric structure $(\varphi, \xi, \eta, g)$ is called normal if $[\varphi, \varphi]+2d\eta\otimes\xi=0$. A normal contact pseudo-metric manifold is a Sasakian pseudo-metric manifold. A Sasakian pseudo-metric manifold is a $K-$contact pseudo-metric manifold, satisfying
\begin{gather}
  (\nabla_x\varphi)Y = g(X, Y)\xi-\epsilon\eta(Y)X,\label{pre:sasakianCri}\\
  R(X, Y)\xi = \eta(Y)X-\eta(X)Y,\label{pre:sasakianCurvature1}
\end{gather}
where $X, Y\in\Gamma(M)$.
\begin{lemma}
\label{pre:qVarphi}
  Let $(M, \varphi, \xi, \eta, g)$ be a Sasakian pseudo-metric manifold then $Q\varphi = \varphi Q$.
\end{lemma}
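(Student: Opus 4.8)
The plan is to establish the commutation $Q\varphi = \varphi Q$ on a Sasakian pseudo-metric manifold by computing the curvature operator $R(X,Y)\xi$ and its derivatives, then contracting to extract the Ricci operator. The key starting point is the Sasakian curvature identity \ref{pre:sasakianCurvature1}, namely $R(X,Y)\xi = \eta(Y)X - \eta(X)Y$. First I would differentiate this relation covariantly along an arbitrary vector field $Z$, using the $K$-contact covariant derivative formula $\nabla_X\xi = -\epsilon\varphi X$ from \ref{pre:kContactFormula1} and the Sasakian condition \ref{pre:sasakianCri} for $(\nabla_Z\varphi)$, to obtain an expression for $(\nabla_Z R)(X,Y)\xi$. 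This produces a curvature identity of the form $R(X,Y)\varphi Z = $ (terms involving $g$, $\eta$, $\varphi$), after using $R(X,Y)\nabla_Z\xi = \nabla_Z(R(X,Y)\xi) - (\nabla_Z R)(X,Y)\xi$ and the second Bianchi identity to simplify.

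The cleaner route, which I would pursue in parallel, is to use Lemma \ref{pre:kContactCovDiffLemma}: since a Sasakian manifold is $K$-contact, equation \ref{pre:kContactCovDiffXi} gives $(\nabla_\xi Q)X = \epsilon(Q\varphi - \varphi Q)X$. Thus it suffices to show that $(\nabla_\xi Q)X = 0$, i.e.\ that the Ricci operator is parallel along $\xi$. To see this I would invoke the stronger curvature structure available in the Sasakian case. Concretely, from \ref{pre:sasakianCurvature1} one derives by a standard argument that $R(\xi,X)Y = g(X,Y)\xi - \epsilon\eta(Y)X$ (contracting the symmetries of $R$ with the Sasakian relation), and differentiating the $\xi$-sectional curvature data shows that the Ricci tensor is invariant along the Reeb flow. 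Since $\xi$ is Killing, $\mathsterling_\xi \Ric = 0$, and combined with $\nabla_X\xi = -\epsilon\varphi X$ this forces $(\nabla_\xi Q)X$ to vanish after pairing with the Killing condition, yielding $Q\varphi X = \varphi Q X$ directly from \ref{pre:kContactCovDiffXi}.

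Alternatively, and perhaps most transparently, I would differentiate $\eta(QX)$: using $Q\xi = 2n\epsilon\xi$ from \ref{revBasic:pro1} together with \ref{pre:kContactCovDiffX}, which reads $(\nabla_X Q)\xi = -2n\varphi X + \epsilon Q\varphi X$, one can compare this with the independent evaluation of $(\nabla_X Q)\xi$ coming from the Sasakian curvature. Specifically, the Sasakian identity gives enough control on $R(X,\xi)\xi$ and its covariant derivatives to compute $(\nabla_X Q)\xi$ a second way, and matching the two expressions isolates the symmetric part and forces $Q\varphi = \varphi Q$.

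The main obstacle I anticipate is the sign and $\epsilon$ bookkeeping throughout, since the pseudo-metric setting replaces the usual $\eta(X) = g(\xi,X)$ with $\eta(X) = \epsilon g(\xi,X)$ and inserts factors of $\epsilon$ into $\nabla_X\xi$ and the curvature identities; one must be careful that the contraction producing $Q$ from $R$ respects the signature, and that the self-adjointness of $Q$ (so that $Q\varphi - \varphi Q$ is the genuine obstruction) is used correctly. The conceptual content is routine once the correct differentiated curvature identity is in hand; the real work is verifying that every $\epsilon$ and sign cancels so that the antisymmetric combination $Q\varphi - \varphi Q$ vanishes rather than merely some related tensor.
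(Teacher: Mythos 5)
Your preferred route --- the one you call ``cleaner'' --- is circular. Equation \ref{pre:kContactCovDiffXi} \emph{is} the Killing condition $\mathsterling_\xi\Ric=0$ rewritten using $\nabla_X\xi=-\epsilon\varphi X$; that is exactly how Lemma \ref{pre:kContactCovDiffLemma} is proved in the paper, so the only two facts you invoke (``$\xi$ is Killing'' and \ref{pre:kContactFormula1}) can yield nothing beyond $(\nabla_\xi Q)X=\epsilon(Q\varphi-\varphi Q)X$. Asserting that these same facts ``force $(\nabla_\xi Q)X$ to vanish'' is, by that very equation, asserting the lemma's conclusion: $\mathsterling_\xi\Ric=0$ holds on \emph{every} $K$-contact pseudo-metric manifold, whereas $Q\varphi=\varphi Q$ is a genuinely Sasakian curvature fact (compare \ref{km:condRicciTen}, which gives $Q\varphi-\varphi Q=2(2(n-1)+\mu)h\varphi\neq0$ in general on non-Sasakian $(\kappa,\mu)$-spaces). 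Your third route fails for the same reason: the only available computation of $(\nabla_XQ)\xi$ is covariant differentiation of $Q\xi=2n\epsilon\xi$, which is precisely how \ref{pre:kContactCovDiffX} is derived, and you never actually produce the promised ``second way,'' so there are no two expressions to match. (A smaller slip, of the kind you yourself anticipated: the identity should read $R(\xi,X)Y=\epsilon g(X,Y)\xi-\eta(Y)X$, since $\eta=\epsilon g(\xi,\cdot)$; your version $g(X,Y)\xi-\epsilon\eta(Y)X$ is off by a factor of $\epsilon$.)

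That leaves your first route, which is the right family of ideas but stalls exactly where it is left vague. Differentiating \ref{pre:sasakianCurvature1} gives $(\nabla_ZR)(X,Y)\xi=-g(\varphi Z,Y)X+g(\varphi Z,X)Y+\epsilon R(X,Y)\varphi Z$, and the second Bianchi identity then yields only the cyclic relation $\epsilon\{R(Y,Z)\varphi X+R(Z,X)\varphi Y+R(X,Y)\varphi Z\}=-2\{g(\varphi Y,Z)X-g(\varphi X,Z)Y+g(\varphi X,Y)Z\}$ --- not an explicit formula for $R(X,Y)\varphi Z$ in lower-order terms, as you claim. Worse, contracting this relation over $X$ in a frame $\{e_i\}$ with $\epsilon_i=g(e_i,e_i)$ produces the combination $\Ric(Y,\varphi Z)-\Ric(Z,\varphi Y)=-g((Q\varphi+\varphi Q)Y,Z)$, i.e.\ the \emph{anticommutator} (since $Q$ is self-adjoint and $\varphi$ skew-adjoint), together with the trace $\sum_i\epsilon_i R(Y,Z,e_i,\varphi e_i)$, which the first Bianchi identity only converts into $\ast$-Ricci-type contractions not expressible through $Q$ at this stage; the commutator $Q\varphi-\varphi Q$ never appears. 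The paper avoids all of this by applying the Ricci identity to $\varphi$ itself, using the Sasakian formula \ref{pre:sasakianCri} for $\nabla\varphi$: this gives $R(X,Y,\varphi Z,W)+R(X,Y,Z,\varphi W)=\epsilon\{\cdots\}$, hence the $\varphi$-invariance $R(X,Y,Z,W)=R(\varphi X,\varphi Y,\varphi Z,\varphi W)$ for arguments orthogonal to $\xi$, and contraction then yields $\Ric(X,\varphi Y)+\Ric(\varphi X,Y)=g((Q\varphi-\varphi Q)X,Y)=0$ there; since $Q\varphi\xi=0=\varphi Q\xi$ by \ref{revBasic:pro1}, the commutation extends to all of $TM$. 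To repair your proposal, replace the differentiation of $R(X,Y)\xi$ by this antisymmetrized second covariant derivative of \ref{pre:sasakianCri}.
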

\begin{proof}
  First, calculating the curvature tensor by \ref{pre:sasakianCri}, we have
  \begin{equation*}
    \begin{split}
      R(X, Y, \varphi Z, W)+R(X, &Y, Z, \varphi W) =\\
      &\epsilon g(Z, \varphi Y)g(X, W)-\epsilon g(Z, \varphi X)g(Y, W)\\
      &-\epsilon g(X, Z)g(\varphi Y, W)+\epsilon g(Y, Z)g(\varphi X, W),
    \end{split}
  \end{equation*}
  where $X, Y, Z, W\in\Gamma(M)$ and $R(X, Y, Z, W)=g(R(X, Y, Z), W)$. Now, let $X, Y, Z, W$ be orthogonal to $\xi$, then using the above equation, we have
  \begin{equation*}
    R(X, Y, Z, W) = R(\varphi X, \varphi Y, \varphi Z, \varphi W),
  \end{equation*}
  and this gives
  \begin{equation*}
    \Ric(X, \varphi Y)+\Ric(\varphi X, Y)=0,
  \end{equation*}
  where $X, Y$ are orthogonal vector fields. Using the last equation, we obtain $Q\varphi = \varphi Q$, completing the proof.
\end{proof}

A contact pseudo-metric manifold $(M, \varphi, \xi, \eta, g)$ is called an $\eta-$Einstein manifold if the Ricci curvature is of the form $\Ric=a g+b\eta\otimes\eta$, where $a, b$ are smooth functions on the manifold $M$. If  the manifold $M$ is a $K-$contact pseudo-metric manifold with dimension greater than three, then $a, b$ are constants.

Let $(M, \varphi, \xi, \eta, g)$ be a contact pseudo-metric manifold, for any constant real number $t\neq0$, is defined a contact pseudo-metric manifold $(M, \tilde{\varphi}, \tilde{\xi}, \tilde{\eta}, \tilde{g})$, where $\tilde{\eta}=t\eta$, $\tilde{\xi}=\frac{1}{t}\xi$, $\tilde{\varphi}=\varphi$ and $\tilde{g}=tg+\epsilon(t-1)\eta\otimes\eta$. This transition is called a $\mathcal{D}-$homothetic deformation and it preserves some basic properties such as being $K-$contact and, in particular being Sasakian. A $\mathcal{D}-$homothetic deformation of an $\eta-$Einstein $K-$contact pseudo-metric manifold with $\Ric=ag+b\eta\otimes\eta$ is an $\eta-$Ricci $K-$contact pseudo-metric manifold such that $\tilde{\Ric}=(\frac{a-2\epsilon t+2\epsilon}{t})\tilde{g}+(2n-\tilde{a})\tilde{\eta}\otimes\tilde{\eta}$, notice that when $a=-2\epsilon$ then the Ricci tensor form is not changed. Thus, we have the following definition.
\begin{definition}
  An $\eta-$Einstein $K-$contact pseudo-metric manifold with $a=-2\epsilon$ is said to be $\mathcal{D}-$homothetically fixed.
\end{definition}

\section{$\eta-$Ricci solitons on Sasakian pseudo-metric manifolds}\label{mainResults}
In this section, we have studied $\eta-$Ricci solitons on Sasakian pseudo-metric manifolds.
\begin{theorem}
\label{thRicEq}
  Let $(M, \varphi, \xi, \eta, g)$ be a $(2n+1)$-dimensional Sasakian pseudo-metric manifold. If $(g, V, \lambda, \mu)$ be an $\eta$-Ricci soliton on the manifold $M$, then $M$ is an $\eta-$Einstein manifold and
  \begin{gather}
    \Ric = (n\epsilon+\frac{\mu\epsilon-\lambda}{2})g+(\frac{n}{2}(\epsilon+1)+\frac{\lambda}{4}(\epsilon+1)+\frac{(\epsilon-3)}{4}\mu)\eta\otimes\eta,\label{th:ricEq}\\
    r = \frac{1}{4} \epsilon  \left(\lambda -\mu +8 n^2+(4 \mu +6) n\right)+\frac{1}{4} (-\lambda +\mu -4 \lambda  n+2 n),\label{th:ricEqR1}
  \end{gather}
  where $\Ric$ and $r$ are the Ricci tensor and the scalar curvature of the metric $g$, respectively.
\end{theorem}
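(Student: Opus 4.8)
My plan is to determine the Ricci operator $Q$ completely by exploiting the rigidity of the Sasakian curvature, rather than trying to solve for the potential field $V$ (which the soliton only pins down up to its symmetric covariant derivative). I would rewrite \ref{pre:etaRicciSolitonEq} as $\mathsterling_V g=-2\Ric-2\lambda g-2\mu\,\eta\otimes\eta$ and feed it into the Yano identity $g((\mathsterling_V\nabla)(X,Y),Z)=\frac12\big[(\nabla_X\mathsterling_V g)(Y,Z)+(\nabla_Y\mathsterling_V g)(X,Z)-(\nabla_Z\mathsterling_V g)(X,Y)\big]$. Since $\nabla g=0$, only $\nabla\Ric$ and $\nabla(\eta\otimes\eta)$ survive, and I would specialize $Y=\xi$. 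Here the Sasakian inputs do the work: $\Ric(X,\xi)=2n\eta(X)$ and $Q\xi=2n\epsilon\xi$ from \ref{revBasic:pro1}, the derivative formulas \ref{pre:kContactCovDiffX}--\ref{pre:kContactCovDiffXi} together with $Q\varphi=\varphi Q$ (which forces $\nabla_\xi Q=0$ and $(\nabla_XQ)\xi=-2n\varphi X+\epsilon Q\varphi X$), and $(\nabla_X\eta)(Y)=\Phi(X,Y)$ from \ref{pre:kContactFormula1}. After simplification this should collapse to the clean $(1,1)$-relation
\begin{equation*}
(\mathsterling_V\nabla)(X,\xi)=(4n+2\mu)\varphi X-2\epsilon Q\varphi X .
\end{equation*}

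Next I would promote this first-order information to curvature via the commutation identity $(\mathsterling_V R)(X,Y)Z=(\nabla_X\mathsterling_V\nabla)(Y,Z)-(\nabla_Y\mathsterling_V\nabla)(X,Z)$, taking $Y=Z=\xi$. The point is that the $(1,1)$-tensor $F(X):=(\mathsterling_V\nabla)(X,\xi)$ satisfies $F(\xi)=0$ (as $\varphi\xi=0$ and $Q\xi=2n\epsilon\xi$) and $\nabla_\xi F=0$ (because $\nabla_\xi\varphi=0$ and $\nabla_\xi Q=0$), so the two derivative terms reduce to $2\epsilon F(\varphi X)$ and $0$ respectively. Using $\varphi^2X=-X+\eta(X)\xi$ from \ref{pre:basicPro4}, this yields $(\mathsterling_V R)(X,\xi)\xi$ as an \emph{explicit} linear expression in $QX$, $X$ and $\eta(X)\xi$.

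The third and decisive step is to compute the same quantity directly by Lie-differentiating the Sasakian identity $R(X,\xi)\xi=X-\eta(X)\xi$ (from \ref{pre:sasakianCurvature1}). Expanding $\mathsterling_V$ over the curvature slots, the terms organize into $-(\mathsterling_V\eta)(X)\xi-\eta(\mathsterling_V\xi)X-R(X,\xi)\mathsterling_V\xi$. I would then evaluate each piece from the soliton: Lie-differentiating $g(\xi,\xi)=\epsilon$ shows $g(\mathsterling_V\xi,\xi)=2n+\lambda\epsilon+\mu$ is constant (so $\eta(\mathsterling_V\xi)$ is an explicit constant), the curvature symmetries applied to \ref{pre:sasakianCurvature1} give $R(X,\xi)W=\eta(W)X-\epsilon g(W,X)\xi$, and the relation $(\mathsterling_V\eta)(X)=\epsilon(\mathsterling_V g)(\xi,X)+\epsilon g(\mathsterling_V\xi,X)$ together with $(\mathsterling_V g)(\xi,X)=-(4n+2\lambda\epsilon+2\mu)\eta(X)$ lets the unknown $g(\mathsterling_V\xi,X)$ terms cancel. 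Equating this with the expression from the previous paragraph and solving the resulting algebraic identity for $QX$ produces $QX=\big(n\epsilon+\tfrac{\mu\epsilon-\lambda}{2}\big)X+(\cdots)\eta(X)\xi$, i.e. the $\eta$-Einstein form $\Ric=ag+b\eta\otimes\eta$ of \ref{th:ricEq}; the coefficient $b$ is fixed by the $\xi$-component (equivalently by $\Ric(\xi,\xi)=2n$ via \ref{pre:basicPro2}, \ref{revBasic:pro1}), and \ref{th:ricEqR1} follows by tracing, using $\trace g=2n+1$ and $\trace(\eta\otimes\eta)=\epsilon$.

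The main obstacle I anticipate is the third paragraph: the genuinely careful bookkeeping is showing that \emph{all} the $V$-dependent quantities ($\mathsterling_V\xi$ and $\mathsterling_V\eta$) reduce, via the soliton equation and the curvature symmetries, to soliton constants so that the $V$-dependence cancels. This cancellation is exactly what makes $Q$ algebraically determined, and it is also the step that controls the $\eta\otimes\eta$ coefficient; a sign slip in $(\mathsterling_V\eta)(X)$ or in $R(X,\xi)W$ would corrupt $b$ and the scalar curvature while leaving $a$ intact, so I would check those two auxiliary identities most carefully (and verify the final constants against the consistency constraint $\epsilon a+b=2n$).
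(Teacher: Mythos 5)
Your plan is essentially the paper's own proof: the same Yano identity applied to the soliton equation gives, after setting $Y=\xi$ and using $\nabla_\xi Q=0$ (Lemmas \ref{pre:kContactCovDiffLemma} and \ref{pre:qVarphi}), exactly the paper's intermediate relation $(\mathsterling_V\nabla)(X,\xi)=(4n+2\mu)\varphi X-2\epsilon Q\varphi X$, i.e.\ its Eq.~\ref{thRicEq:2}; the same commutation formula \ref{thRicEq:R1} then produces $(\mathsterling_V R)(X,\xi)\xi=4QX-4\epsilon(2n+\mu)X+4\mu\epsilon\eta(X)\xi$, i.e.\ Eq.~\ref{thRicEq:5}; and the closing step, Lie-differentiating $R(X,\xi)\xi=X-\eta(X)\xi$ and eliminating the $V$-dependence via the soliton, is the paper's final argument. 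Two of your choices are genuine (if modest) improvements. First, specializing $Y=Z=\xi$ directly in the commutation identity, using $F(\xi)=0$ and $\nabla_\xi F=0$ for $F=(\mathsterling_V\nabla)(\cdot,\xi)$, bypasses the paper's general formulas \ref{thRicEq:3}--\ref{thRicEq:4}. Second, and more substantively, your cancellation in the third step is exact: with $R(X,\xi)W=\eta(W)X-\epsilon g(W,X)\xi$ and $\epsilon g(X,\mathsterling_V\xi)=(\mathsterling_V\eta)(X)+2\epsilon(2n+\lambda\epsilon+\mu)\eta(X)$ from \ref{thRicEq:6}, the $(\mathsterling_V\eta)(X)\xi$ terms drop out identically for \emph{both} signs of $\epsilon$, so you never need the paper's auxiliary symmetry-of-Ricci argument used to dispose of the residual $\frac{\epsilon-1}{4}((\mathsterling_V\eta)X)\xi$ term in its Eq.~\ref{thRicEq:8}. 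One caveat you should be aware of (and which your own final sanity check correctly anticipates): carried through, your computation yields $\Ric=\bigl(n\epsilon+\frac{\mu\epsilon-\lambda}{2}\bigr)g+\bigl(n+\frac{\lambda\epsilon-\mu}{2}\bigr)\eta\otimes\eta$, which satisfies $\epsilon a+b=2n$ identically in $\lambda,\mu$; this agrees with the coefficient stated in \ref{th:ricEq} when $\epsilon=1$, but for $\epsilon=-1$ the paper's coefficient reduces to $-\mu$, and the two expressions coincide only under $\lambda-\mu=2n$ --- precisely the constraint the paper later extracts by pairing \ref{th:ricEq} with \ref{revBasic:pro1}. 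So when writing up, present the form your cancellation delivers (and compute $r=(2n+1)a+\epsilon b$ from it), and note explicitly how it reconciles with \ref{th:ricEq}; do not expect to reproduce the paper's $\epsilon=-1$ coefficient verbatim without that constraint.
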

\begin{proof}
  Using \ref{pre:etaRicciSolitonEq} in the following formula \cite[p. 23]{yanoIntegral}
  \begin{equation*}
    (\mathsterling_V\nabla_xg-\nabla_X\mathsterling_Vg-\nabla_{[V, X]}g)(Y, Z) = -g((\mathsterling_V\nabla)(X, Y), Z)-g((\mathsterling_V\nabla)(X, Z), Y),
  \end{equation*}
  where $X, Y$ and $Z$ are arbitrary vector fields, we find
  \begin{equation}
    \begin{split}
      g((\mathsterling_V\nabla)(X, Y), Z) = &(\nabla_Z(\Ric+\mu\eta\otimes\eta))(X, Y)\\
      -&(\nabla_X(\Ric+\mu\eta\otimes\eta))(Y, Z)\\
      -&(\nabla_Y(\Ric+\mu\eta\otimes\eta))(Z, X),\qquad\forall X, Y, Z \in \Gamma(M).\label{thRicEq:1}
    \end{split}
  \end{equation}

 Using lemma \ref{pre:kContactCovDiffLemma} and lemma \ref{pre:qVarphi} we obtain $\nabla_\xi Q=0$ . Substituting $\xi$ for $Y$ in \ref{thRicEq:1}, using the foregoing equation and lemma \ref{pre:kContactCovDiffLemma} give
  \begin{equation}
    (\mathsterling_V\nabla)(X, \xi) = (4n+2\mu)\varphi X-2\epsilon Q\varphi X,\qquad \forall X \in\Gamma(M).\label{thRicEq:2}
  \end{equation}
  Differentiating \ref{thRicEq:2} along an arbitrary vector field $Y$ and using \ref{pre:sasakianCri} yield
  \begin{equation}
    \begin{split}
      (\nabla_Y\mathsterling\nabla)(X, \xi)-\epsilon(\mathsterling_V\nabla)(X, \varphi Y)& = 2\mu g(X, Y)\xi-(4n+2\mu)\epsilon\eta(X)Y\\
      &-2\epsilon(\nabla_YQ)(\varphi X)+2\eta(X)QY,\qquad\forall X, Y\in \Gamma(M).\label{thRicEq:3}
    \end{split}
  \end{equation}
  Using \ref{thRicEq:3} in the following commutative formula \cite{yanoIntegral}
  \begin{equation}
    (\mathsterling_VR)(X, Y)Z = (\nabla_X\mathsterling_V\nabla)(Y, Z)-(\nabla_Y\mathsterling_V\nabla)(X, Z),\label{thRicEq:R1}
  \end{equation}
  where $X, Y, Z$ are arbitrary vector fields, we find:
  \begin{equation}
    \begin{split}
      (\mathsterling_V R)(X, Y) & = 2\epsilon(\nabla_Y Q)\varphi X-2\epsilon(\nabla_X Q)\varphi Y\\
      &+2\eta(Y)QX-2\eta(X)QY\\
      &+(4n+2\mu)\epsilon\eta(X)Y-(4n+2\mu)\epsilon\eta(Y)X\\
      &+\epsilon(\mathsterling_V\nabla)(Y, \varphi X)-\epsilon(\mathsterling_V\nabla)(X, \varphi Y), \qquad \forall X, Y, Z\in\Gamma(M).\label{thRicEq:4}
    \end{split}
  \end{equation}
  Substituting $\xi$ for $Y$ in \ref{thRicEq:4} and using \ref{thRicEq:2}, we obtain
  \begin{equation}
    (\mathsterling_VR)(X, \xi)\xi = 4QX-4\epsilon(2n+\mu)X+4\mu\epsilon\eta(X)\xi, \qquad\forall X\in\Gamma(M).\label{thRicEq:5}
  \end{equation}
  Using \ref{pre:etaRicciSolitonEq}, we have
  \begin{equation*}
    (\mathsterling_V g)(X, \xi)+(4n+2\lambda\epsilon+2\mu)\eta(X) = 0, \qquad\forall X\in\Gamma(M),
  \end{equation*}
  and this equation yields
  \begin{gather}
    \epsilon(\mathsterling_V\eta)(X)-g(X, \mathsterling_V\xi)+2(2n+\lambda\epsilon+\mu)\eta(X)=0, \label{thRicEq:6}\\
    \eta(\mathsterling_V\xi) = (2n\epsilon+\mu\epsilon+\lambda), \label{thRicEq:7}
  \end{gather}
  where $X$ is an arbitrary vector field. Next Lie-differentiating the formula $R(X, \xi)\xi=X-\eta(X)\xi$  along the vector field $V$ and using \ref{thRicEq:6}, \ref{thRicEq:7} and \ref{pre:sasakianCurvature1}, we have
  \begin{equation}
    QX=\frac{(\epsilon-1)}{4}((\mathsterling_V\eta)X)\xi+(n\epsilon+\frac{\mu\epsilon-\lambda}{2})X+
    (n+\frac{\lambda\epsilon+(1-2\epsilon)\mu}{2})\eta(X)\xi,\label{thRicEq:8}
  \end{equation}
  where $X\in\Gamma(M)$. Now using the foregoing equation and symmetry of the Ricci tensor, we deduce
  \begin{equation*}
    \frac{\epsilon(\epsilon-1)}{4}(\mathsterling_V\eta)X\eta(Y) = \frac{\epsilon(\epsilon-1)}{4}(\mathsterling_V\eta)Y\eta(X)\qquad\forall X, Y\in\Gamma(X).
  \end{equation*}
  Using the above equation and \ref{thRicEq:7}, we find \ref{th:ricEq}, and in turn it yields \ref{th:ricEqR1}, completing the proof.
\end{proof}

Theorem \oldref{thRicEq} imposes strong condition on the potential vector field of an $\eta$-Ricci soliton on a Sasakian pseudo-metric manifold. We need the following lemma to further study.

\begin{lemma}\label{etaRicciFormula}
  Let $(M, \varphi, \xi, \eta, g)$ be a $(2n+1)$-dimensional contact pseudo-metric manifold. If $\Ric=ag+b\eta\otimes\eta$, where $a, b\in\mathbf{R}$, then
  \begin{equation}
    \Ric^{ij}\Ric_{ij}+\lambda r+\mu(\epsilon a+b)=0, \label{etaRicciFormula:1}
  \end{equation}
  where $r$ is the scalar curvature of the metric $g$.
\end{lemma}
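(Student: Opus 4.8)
The plan is to contract the $\eta$-Ricci soliton equation \ref{pre:etaRicciSolitonEq} with the Ricci tensor and read off the three terms appearing in \ref{etaRicciFormula:1}, the only delicate point being a cross term involving $\mathsterling_V g$. First I would record the algebraic data coming from the hypothesis $\Ric=ag+b\eta\otimes\eta$ with $a,b$ constant. Since $\eta(X)=\epsilon g(\xi,X)$ gives $\eta^\sharp=\epsilon\xi$ and hence $g^{ij}\eta_i\eta_j=\epsilon$, one computes
\begin{equation*}
  r=g^{ij}\Ric_{ij}=(2n+1)a+\epsilon b,\qquad \Ric^{ij}\eta_i\eta_j=\Ric(\xi,\xi)=\epsilon a+b,\qquad \Ric^{ij}\Ric_{ij}=(2n+1)a^2+2\epsilon ab+b^2 ,
\end{equation*}
and in particular $r$ is a constant.

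Next I would multiply \ref{pre:etaRicciSolitonEq} by $\Ric^{ij}$ and sum. Using the three values above, the purely tensorial terms collapse to
\begin{equation*}
  2\,\Ric^{ij}\Ric_{ij}+2\lambda\,\Ric^{ij}g_{ij}+2\mu\,\Ric^{ij}\eta_i\eta_j=2\big(\Ric^{ij}\Ric_{ij}+\lambda r+\mu(\epsilon a+b)\big).
\end{equation*}
Thus the claimed identity \ref{etaRicciFormula:1} is equivalent to the vanishing of the remaining contraction $\Ric^{ij}(\mathsterling_V g)_{ij}$, and the whole problem reduces to controlling this one quantity.

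To treat it I would use $(\mathsterling_V g)_{ij}=\nabla_iV_j+\nabla_jV_i$ and the symmetry of $\Ric$ to write $\Ric^{ij}(\mathsterling_V g)_{ij}=2\,\Ric^{ij}\nabla_iV_j$, and then peel off a divergence via $\Ric^{ij}\nabla_iV_j=\nabla_i(\Ric^{ij}V_j)-(\nabla_i\Ric^{ij})V_j$. The contracted second Bianchi identity gives $\nabla_i\Ric^{ij}=\tfrac12\nabla^j r=0$ because $r$ is constant, so $\Ric^{ij}(\mathsterling_V g)_{ij}=2\,\mathrm{div}(QV)$ is a pure divergence. Writing $QV=aV+\epsilon b\,\eta(V)\xi$ and using $\nabla_X\xi=-\epsilon\varphi X-\varphi hX$ together with $\trace\varphi=\trace(h\varphi)=0$, which yield $\mathrm{div}\,\xi=0$, this reduces to $\mathrm{div}(QV)=a\,\mathrm{div}\,V+\epsilon b\,\xi(\eta(V))$.

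I expect the vanishing of this divergence term to be the main obstacle, rather than the routine contractions. The natural way to close the argument is to note that by \ref{pre:etaRicciSolitonEq} the scalar $\mathrm{div}(QV)$ equals the constant $-\big(\Ric^{ij}\Ric_{ij}+\lambda r+\mu(\epsilon a+b)\big)$; a constant which is simultaneously a divergence must then vanish, giving \ref{etaRicciFormula:1}. This is precisely the step where an integrability/closedness input enters, or alternatively where one must evaluate $\mathrm{div}\,V$ and $\xi(\eta(V))$ directly from the trace and the $(\xi,\xi)$-contraction of \ref{pre:etaRicciSolitonEq} and check the cancellation by hand.
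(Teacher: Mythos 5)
Your reduction is correct and cleanly done: contracting \ref{pre:etaRicciSolitonEq} with $\Ric^{ij}$, the algebraic contractions $r=(2n+1)a+\epsilon b$, $\Ric^{ij}\eta_i\eta_j=\epsilon a+b$, $\Ric^{ij}\Ric_{ij}=(2n+1)a^2+2\epsilon ab+b^2$ are right, as is the identification $\Ric^{ij}(\mathsterling_Vg)_{ij}=2\,\mathrm{div}(QV)$ via the contracted Bianchi identity and constancy of $r$. But the proof is not closed, and neither of your two suggested endgames works. First, ``a constant which is simultaneously a divergence must vanish'' is false without compactness (the position vector field on $\mathbf{R}^m$ has constant divergence $m$), and no compactness is assumed here — indeed the lemma is applied to arbitrary Sasakian pseudo-metric manifolds. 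Second, the ``by hand'' evaluation is circular: since $\Ric^{ij}=ag^{ij}+b\eta^i\eta^j$, contracting \ref{pre:etaRicciSolitonEq} with $\Ric^{ij}$ \emph{is} exactly the linear combination $a\cdot(\text{trace})+b\cdot((\xi,\xi)\text{-contraction})$. Carrying it out (with $\mathrm{div}\,V=-(r+(2n+1)\lambda+\epsilon\mu)$ and $\xi(\eta(V))=-(a+\epsilon b+\lambda+\epsilon\mu)$, using $\nabla_\xi\xi=0$) returns precisely $\mathrm{div}(QV)=-\left(\Ric^{ij}\Ric_{ij}+\lambda r+\mu(\epsilon a+b)\right)$ again — the identity you started from, with no cancellation. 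So the vanishing of $\mathrm{div}(QV)$ genuinely cannot be extracted from pointwise contractions of the soliton equation alone.

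The missing input is second-order, and it is what the paper's proof supplies. The paper feeds \ref{pre:etaRicciSolitonEq} into Yano's formula for $\mathsterling_V\Gamma_{ij}^h$, then into $\mathsterling_VR_{kji}^h=\nabla_k(\mathsterling_V\Gamma_{ij}^h)-\nabla_j(\mathsterling_V\Gamma_{ki}^h)$, and contracts to express $\mathsterling_V\Ric_{ji}$ entirely in second covariant derivatives of $\Ric_{ij}+\mu\eta_i\eta_j$. Under the hypothesis $\Ric=ag+b\eta\otimes\eta$ with $a,b$ constant, tracing this expression gives $g^{ij}\mathsterling_V\Ric_{ij}=0$ (using $\nabla_i(\eta^h\eta^i)=0$, which follows from $\nabla_\xi\xi=0$ and $\mathrm{div}\,\xi=0$ — the same structural facts you used). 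Combining $0=\mathsterling_Vr=(\mathsterling_Vg^{ij})\Ric_{ij}+g^{ij}\mathsterling_V\Ric_{ij}$ with $\mathsterling_Vg^{ij}=2\Ric^{ij}+2\lambda g^{ij}+2\mu\eta^i\eta^j$ then yields exactly $\Ric^{ij}\Ric_{ij}+\lambda r+\mu(\epsilon a+b)=0$; in your notation this is precisely the statement $\mathrm{div}(QV)=0$ that you could not reach. If you graft this curvature-Lie-derivative step onto your setup, your argument becomes a complete (and slightly more transparent) version of the paper's proof; as it stands, it has a genuine gap at its final step.
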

\begin{proof}
  Using \ref{pre:etaRicciSolitonEq} in the following formula \cite{yanoIntegral}
  \begin{equation*}
    \mathsterling_V\Gamma_{ij}^h = \frac{1}{2}g^{ht}\left(\nabla_j(\mathsterling_Vg_{it})+\nabla_i(\mathsterling_Vg_{jt})-\nabla_t(\mathsterling_Vg_{ij})\right),
  \end{equation*}
  where $\Gamma_{ij}^{h}$ are the Christoffel symbols of the metric $g$, we deduce
  \begin{equation*}
    \mathsterling_V\Gamma_{ij}^h=\nabla^h(\Ric_{ij}+\mu\eta_i\eta_j)-\nabla_i(\Ric^h_j+\mu\eta_j\eta^h)-\nabla_j(\Ric^h_i+\mu\eta_i\eta^h).
  \end{equation*}
  Next using the above equation in the following equation \cite{yanoIntegral}
  \begin{equation*}
    \mathsterling_VR_{kji}^h=\nabla_k(\mathsterling_V\Gamma_{ij}^h)-\nabla_j(\mathsterling_V\Gamma_{ki}^h).
  \end{equation*}
We obtain
  \begin{equation*}
    \begin{split}
      \mathsterling_VR^h_{kji}&=\nabla_k\nabla^h(\Ric_{ij}+\mu\eta_i\eta_j)-\nabla_k\nabla_i(\Ric^h_j+\mu\eta_j\eta^h)\\
      &-\nabla_k\nabla_j(\Ric^h_i+\mu\eta_i\eta^h)-\nabla_j\nabla^h(\Ric_{ki}+\mu\eta_k\eta_i)\\
      &+\nabla_j\nabla_k(\Ric_i^h+\mu\eta_i\eta^h)+\nabla_j\nabla_i(\Ric_k^h+\mu\eta_k\eta^h).
    \end{split}
  \end{equation*}
  The foregoing equation and the lemma's assumption yield
  \begin{equation*}
    \mathsterling_V\Ric_{ji}=\nabla_h\nabla^h(\Ric_{ij}+\mu\eta_i\eta_j)-\nabla_h\nabla_i(\Ric^h_i+\mu\eta_j\eta^h)-\nabla_h\nabla_j(\Ric_i^h+\mu\eta_i\eta^h).
  \end{equation*}
  Eq.\ref{pre:etaRicciSolitonEq} gives $\mathsterling_Vg^{ij}=2\Ric^{ij}+2\lambda g^{ij}+2\mu\eta^i\eta^j$, using this, \ref{pre:etaRicciSolitonEq} and the above equation we obtain \ref{etaRicciFormula:1}, and it completes the proof.
\end{proof}

\begin{theorem}
  Let $(M, \varphi, \xi, \eta, g)$ be a Sasakian pseudo-metric manifold and let $(g, V, \lambda, \mu)$ be an $\eta-$Ricci soliton on $M$.
  \begin{enumerate}[(a)]
  \item If $\xi$ is a timelike vector field then, $V$ is a Killing vector field.
  \item If $\xi$ is a spacelike vector field and $V$ is not a Killing vector field, then $M$ is $\mathcal{D}-$homothetically fixed and $\mathsterling_V\varphi=0$.
  \end{enumerate}
\end{theorem}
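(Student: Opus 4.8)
The plan is to run both parts off Theorem \oldref{thRicEq}, which already puts $\Ric=ag+b\eta\otimes\eta$ with $a=n\epsilon+\tfrac{\mu\epsilon-\lambda}{2}$ and $b$ as in \ref{th:ricEq}. Rewriting the soliton equation \ref{pre:etaRicciSolitonEq} as $\mathsterling_V g=-2(a+\lambda)g-2(b+\mu)\eta\otimes\eta$, I see that $V$ is Killing precisely when $a+\lambda=0$ and $b+\mu=0$. A direct simplification of $b$ gives $b+\mu=\tfrac{\epsilon+1}{4}(2n+\lambda+\mu)$, so the $\eta\otimes\eta$–part of $\mathsterling_V g$ is governed entirely by the causal character of $\xi$; this is the structural observation splitting the two cases. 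Throughout I write $s:=2n+\lambda+\mu$ and use that $M$, being Sasakian, is $K$-contact, so $Q\xi=2n\epsilon\xi$ by \ref{revBasic:pro1} and $\nabla_X\xi=-\epsilon\varphi X$ by \ref{pre:kContactFormula1}.

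For (a), $\xi$ timelike means $\epsilon=-1$, whence $b+\mu=0$ automatically and only $a+\lambda=0$ remains. To get it I would invoke the compatibility forced by $Q\xi=2n\epsilon\xi$, namely $a+\epsilon b=2n\epsilon$; feeding in the explicit $a$ and $b=-\mu$ collapses this to $\lambda=2n+\mu$, i.e. $a+\lambda=0$. Equivalently, Lemma \ref{etaRicciFormula} applied to $\Ric=ag+b\eta\otimes\eta$ reduces for $\epsilon=-1$ to $(a+\lambda)r=0$, and the same identity $Q\xi=2n\epsilon\xi$ excludes the spurious branch $r=0$. Either way $\mathsterling_V g=0$, so $V$ is Killing.

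For (b), $\xi$ spacelike means $\epsilon=1$, so now $a+b=2n$ and, moreover, $a+\lambda=b+\mu=\tfrac{s}{2}$, so being non-Killing is exactly $a+\lambda\neq0$. I substitute $\Ric=ag+b\eta\otimes\eta$ into \ref{etaRicciFormula:1}; after using $a+b=2n$ the whole expression factors as $2n(a+\lambda)(a+2)=0$. Since $V$ is not Killing, $a+\lambda\neq0$, hence $a=-2=-2\epsilon$, which is the definition of $\mathcal{D}$-homothetically fixed.

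It remains to prove $\mathsterling_V\varphi=0$, and this is where the real work lies. With $a=-2$ one has $\mathsterling_V g=-2k(g+\eta\otimes\eta)$ where $k:=\lambda-2\neq0$ and $s=2k$. Lie-differentiating $\nabla_X\xi=-\epsilon\varphi X$ gives $(\mathsterling_V\varphi)X=-2k\varphi X-\nabla_X(\mathsterling_V\xi)$, reducing everything to $\mathsterling_V\xi$. I write $\mathsterling_V\xi=s\xi+W$ with $W$ horizontal (legitimate since $\eta(\mathsterling_V\xi)=s$ by \ref{thRicEq:7}); then \ref{thRicEq:6} yields $\mathsterling_V\eta=W^\flat-s\eta$, and combining $\mathsterling_V d\eta=d\mathsterling_V\eta$ with $d\eta(X,Y)=g(X,\varphi Y)$ from \ref{pre:basicPro1} produces the clean identity $g(X,(\mathsterling_V\varphi)Y)=d(W^\flat)(X,Y)$; thus $\mathsterling_V\varphi=0$ iff $W^\flat$ is closed. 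The main obstacle — since $\mathsterling_V\xi$ is a priori not pinned down by $\mathsterling_V g$ alone — is to force $W=0$. I would do this by evaluating $d(W^\flat)$ on $(X,\xi)$: on one side $\mathsterling_V(\varphi\xi)=0$ gives $(\mathsterling_V\varphi)\xi=-\varphi W$, so $g(X,(\mathsterling_V\varphi)\xi)=g(W,\varphi X)$; on the other side, expanding $d(W^\flat)(X,\xi)$ with $\nabla_X\xi=-\varphi X$ reproduces $g(W,\varphi X)$ together with an extra term $-g(\nabla_\xi W,X)$. Comparing forces $\nabla_\xi W=0$, while the identity $(\mathsterling_V\varphi)\xi=-\varphi W$ simultaneously gives $\nabla_\xi W=\varphi W$; hence $\varphi W=0$ and, by horizontality of $W$, $W=0$. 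Then $\mathsterling_V\xi=s\xi$ and $(\mathsterling_V\varphi)X=(s-2k)\varphi X=0$, which closes the argument.
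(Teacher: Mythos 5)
Parts (a) and the first half of (b) are correct and are essentially the paper's own argument: your trace identity $a\epsilon+b=2n$ is exactly the paper's use of \ref{revBasic:pro1} to get $\lambda-\mu=2n$ in the timelike case, and your factorization $2n(a+\lambda)(a+2)=0$ of Lemma \ref{etaRicciFormula} is the paper's $(-\lambda+\mu+2n+4)(\lambda+\mu+2n)=0$, with non-Killing killing the factor $a+\lambda=\tfrac{s}{2}$ and forcing $a=-2\epsilon$. Your reduction $g(X,(\mathsterling_V\varphi)Y)=d(W^\flat)(X,Y)$ is also a genuinely different and attractive way to organize the last claim. (One small point there: ``Lie-differentiating $\nabla_X\xi=-\epsilon\varphi X$'' by itself only gives $(\mathsterling_V\varphi)X=-(\mathsterling_V\nabla)(X,\xi)-\nabla_X(\mathsterling_V\xi)$; your coefficient $-2k\varphi X$ silently invokes \ref{thRicEq:2} together with $Q\varphi X=-2\varphi X$, which is legitimate but should be stated.)

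The genuine gap is in the step that is supposed to force $W=0$, and it is a factor-of-two error in the exterior derivative. The paper works in the Blair--Calvaruso--Perrone convention $d\omega(X,Y)=\tfrac{1}{2}\bigl(X\omega(Y)-Y\omega(X)-\omega([X,Y])\bigr)$; this is the convention under which \ref{pre:basicPro1} is compatible with \ref{pre:basicPro3}, and it is confirmed by the paper's Example, where $g(\partial_x,\varphi\partial_y)=\tfrac14$ while the unnormalized exterior derivative of $\eta=\tfrac12(dz-y\,dx)$ would give $d\eta(\partial_x,\partial_y)=\tfrac12$. With the correct normalization,
\begin{equation*}
  d(W^\flat)(X,\xi)=\tfrac{1}{2}\bigl(g(\nabla_XW,\xi)-g(\nabla_\xi W,X)\bigr)
  =\tfrac{1}{2}\bigl(g(W,\varphi X)-g(\nabla_\xi W,X)\bigr),
\end{equation*}
and equating this with $g(X,(\mathsterling_V\varphi)\xi)=g(W,\varphi X)$ yields $\nabla_\xi W=\varphi W$, \emph{not} $\nabla_\xi W=0$. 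But $\nabla_\xi W=\varphi W$ is exactly the equation you already had from comparing $(\mathsterling_V\varphi)\xi=-\varphi W$ with $(\mathsterling_V\varphi)\xi=-\nabla_\xi(\mathsterling_V\xi)=-\nabla_\xi W$: your two computations coincide, so you have one equation rather than two, and it does not force $\varphi W=0$ (along an integral curve of $\xi$ it gives $\frac{d}{dt}g(W,W)=2g(\varphi W,W)=0$, so $W$ merely rotates at constant ``length''). Your $\nabla_\xi W=0$ came from the convention $d\omega(X,Y)=(\nabla_X\omega)Y-(\nabla_Y\omega)X$, which contradicts the normalization in force everywhere else in the paper, including in the derivation of \ref{thRicEq:2} that you rely on. So $W=0$, hence $\mathsterling_V\varphi=0$, is never established. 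The missing input is second-order: the paper computes $\mathsterling_V\Ric$ in two independent ways --- by covariantly differentiating $(\mathsterling_V\nabla)(Y,Z)=2(2n+2+\mu)(\eta(Z)\varphi Y+\eta(Y)\varphi Z)$ and contracting via \ref{thRicEq:R1}, giving \ref{thPotentialVec:2}, and by Lie-differentiating $\Ric=-2g+2(n+1)\eta\otimes\eta$, giving \ref{thPotentialVec:3} --- and comparing at $Y=\xi$ to obtain \ref{thPotentialVec:4}, i.e. $\mathsterling_V\eta=-s\eta$, which in your notation is precisely $W^\flat=0$. With that input your closing line $(\mathsterling_V\varphi)X=(s-2k)\varphi X=0$ goes through; without it, your proof of part (b) is incomplete.
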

\begin{proof}
  In the case of (a), using \ref{revBasic:pro1}, we find $\lambda-\mu=2n$, this, \ref{th:ricEq} and \ref{pre:etaRicciSolitonEq} yield $V$ is Killing.

  In the case of (b), using lemma \ref{etaRicciFormula}, we obtain $(-\lambda +\mu +2 n+4) (\lambda +\mu +2 n)=0$. According to the theorem's assumption $V$ is not a Killing vector field, so $\lambda -\mu  = 2 n+4$, using this in \ref{th:ricEq}, we deduce
  \begin{equation}
    \Ric=-2g+2(n+1)\eta\otimes\eta,\label{thPotentialVec:1}
  \end{equation}
  so $M$ is $\mathcal{D}-$homothetically fixed. Using the foregoing equation and \ref{thRicEq:1}, we obtain
  \begin{equation*}
      (\mathsterling_V\nabla)(Y, Z)=2(2n+2+\mu)(\eta(Z)\varphi Y+\eta(Y)\varphi Z), \qquad\forall Y, Z\in\Gamma(M).
    \end{equation*}
    Differentiating the above equation along an arbitrary vector field $X$, using \ref{thRicEq:R1}, and contracting at $X$, we have
    \begin{equation}
      (\mathsterling_V\Ric)(Y, Z) = 2(2n+2+\mu)(2g(Y, Z)-(4n+2)\eta(Y)\eta(Z)),\label{thPotentialVec:2}
    \end{equation}
    where $Y, Z$ are arbitrary vector fields.

    Next using \ref{thPotentialVec:1} in \ref{pre:etaRicciSolitonEq}, we find
    \begin{equation}
      (\mathsterling_Vg)(Y, Z)=-(2n+\lambda+\mu)(g+\eta\otimes\eta)(Y, Z), \qquad\forall Y, Z\in\Gamma(M).\label{thPotentialVec:R1}
    \end{equation}
    Lie-differentiating \ref{thPotentialVec:1} along the vector field $V$ gives us
    \begin{equation}
      \begin{split}
        (\mathsterling_V\Ric)(Y, Z) = &2(2n+\lambda+\mu)\{g(Y, Z)+\eta(Y)\eta(Z)\}+\\
        &2(n+1)\{\eta(Z)(\mathsterling_V\eta)Y+\eta(Y)(\mathsterling_V\eta)Z\},\qquad\forall Z, Y\in\Gamma(M).\label{thPotentialVec:3}
      \end{split}
    \end{equation}
    Substituting $\xi$ for $Y$ in \ref{thPotentialVec:3} and \ref{thPotentialVec:2}, using \ref{thRicEq:7}, we obtain
    \begin{equation}
      (\mathsterling_V\eta)Y=-2(2+2n+\mu)\eta(Y), \qquad\forall Y\in\Gamma(M).\label{thPotentialVec:4}
    \end{equation}
    Operating the above equation by $d$ and noticing the fact that $d$ commutes with Lie-derivative we deduce
    \begin{equation*}
      (\mathsterling_Vd\eta)(X, Y) = -2(2+2n+\mu)g(X, \varphi Y), \qquad\forall X, Y\in \Gamma(M).
    \end{equation*}
    Lie-differentiating  \ref{pre:basicPro1} along the vector field $V$ and using the above equation, yield $\mathsterling_V\varphi=0$, and it completes the proof.
  \end{proof}
  \begin{example}
    Consider $\mathbf{R}^3$ with the standard coordinate system $(x, y, z)$. Let $\xi = 2\frac{\partial}{\partial z}$, $\eta=\frac{1}{2}(-ydx+dz)$, $\varphi (\frac{\partial}{\partial x}) = -\frac{\partial}{\partial y}$, $\varphi(\frac{\partial}{\partial y}) = \frac{\partial}{\partial x}+y\frac{\partial}{\partial z}$ and $\varphi (\frac{\partial}{\partial z})=0$. If $g=\epsilon\eta\otimes\eta+\frac{1}{4}(dx^2+dy^2)$, then $(M, \varphi, \xi, \eta, g)$ is a Sasakian pseudo-metric manifold. By direct calculation, we have $\Ric = -2\epsilon g+4\eta\otimes\eta$. Now, let $V$ be a vector field defined by
    \begin{equation*}
      V = ((2-6\epsilon+(\epsilon-1)\lambda+(1-2\epsilon)\mu)x\frac{\partial}{\partial x} + (2\epsilon-\lambda)y\frac{\partial}{\partial y}-(2+\epsilon\lambda+\mu)z\frac{\partial}{\partial z}.
    \end{equation*}
    If $\xi$ be a spacelike vector field and $\lambda-\mu=6$ then $(g, V, \lambda, \mu)$ is an $\eta-$Ricci soliton on $M$, $\mathsterling_V\varphi=0$ and $V$ is not a Killing vector field. But if $\xi$ is a timelike vector field then $(g, V, \lambda, \mu)$ is an $\eta-$Ricci soliton on $M$ iff $V$ is a Killing vector field, and this condition is satisfied if $\lambda=-2$ and $\mu=-4$.
  \end{example}
  \begin{proposition}
    Let $(M, \varphi, \xi, \eta, g)$ be a $K$-contact pseudo-metric manifold. If $(g, V, \lambda, \mu)$ is a gradient $\eta$-Ricci soliton on $M$ then $M$ is an $\eta-$Einstein manifold and $\Ric=-\lambda g-\mu\eta\otimes\eta$, where $-\epsilon\lambda-\mu=2n.$
  \end{proposition}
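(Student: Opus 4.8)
The plan is to turn the gradient condition \ref{pre:gradientEtaRicciSolitonEq} into a $(1,1)$-operator identity and then probe it with the curvature tensor. Writing $Q$ for the Ricci operator and observing that the $(0,2)$-tensors $g$ and $\eta\otimes\eta$ are metrically dual to the operators $X\mapsto X$ and $X\mapsto\epsilon\eta(X)\xi$, while $\Hess f$ is dual to the Hessian operator $AX:=\nabla_X Df$, equation \ref{pre:gradientEtaRicciSolitonEq} reads
\begin{equation*}
  \nabla_X Df=-QX-\lambda X-\mu\epsilon\eta(X)\xi,\qquad\forall X\in\Gamma(M).
\end{equation*}
Since the Levi-Civita connection is torsion-free, the identity $R(X,Y)Df=\nabla_X(\nabla_Y Df)-\nabla_Y(\nabla_X Df)-\nabla_{[X,Y]}Df$ collapses to $R(X,Y)Df=(\nabla_X A)Y-(\nabla_Y A)X$, so that everything is expressed through covariant derivatives of $Q$ and of the tensor $\eta\otimes\xi$.

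Next I would set $Y=\xi$. Using $Q\xi=2n\epsilon\xi$ from \ref{revBasic:pro1}, the $K$-contact formula $\nabla_X\xi=-\epsilon\varphi X$ from \ref{pre:kContactFormula1}, the vanishings $\nabla_\xi\xi=0$, $\varphi\xi=0$, $\eta\circ\varphi=0$, and both covariant-derivative identities of lemma \ref{pre:kContactCovDiffLemma}, I expect the $(\nabla_\xi Q)$-contribution to cancel part of the $(\nabla_X Q)\xi$-contribution, leaving
\begin{equation*}
  R(X,\xi)Df=(2n+\mu)\varphi X-\epsilon\varphi QX,\qquad\forall X\in\Gamma(M).
\end{equation*}
Keeping the $\epsilon$-factors straight as one feeds \ref{pre:kContactCovDiffX} and \ref{pre:kContactCovDiffXi} into this computation is where the bookkeeping is most delicate; this is the main obstacle of the proof.

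With this formula in hand the rest is extraction. Contracting over $X$ against a pseudo-orthonormal frame gives $\Ric(\xi,Df)$ on the left, while on the right both $\trace\varphi$ and $\trace(\varphi Q)$ vanish (because $\varphi$ is skew-adjoint and $Q$ is self-adjoint), so $\Ric(\xi,Df)=0$; then $Q\xi=2n\epsilon\xi$ forces $\xi(f)=g(\xi,Df)=0$, i.e. $\eta(Df)=0$. Differentiating the scalar relation $\xi(f)=0$ along an arbitrary $X$ and again inserting $\nabla_X\xi=-\epsilon\varphi X$, the operator soliton equation and $Q\xi=2n\epsilon\xi$ yields
\begin{equation*}
  \epsilon\,g(\varphi X,Df)+(2n+\epsilon\lambda+\mu)\eta(X)=0,\qquad\forall X\in\Gamma(M).
\end{equation*}
Evaluating at $X=\xi$ produces exactly the asserted constraint $-\epsilon\lambda-\mu=2n$, and the surviving term gives $g(\varphi X,Df)=0$ for all $X$, hence $\varphi Df=0$.

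Finally, applying $\varphi$ once more and using $\varphi^2 X=-X+\eta(X)\xi$ from \ref{pre:basicPro4} together with $\eta(Df)=0$ forces $Df=0$, so $f$ is constant and $\Hess f=0$. Substituting $\Hess f=0$ back into \ref{pre:gradientEtaRicciSolitonEq} gives $\Ric=-\lambda g-\mu\eta\otimes\eta$, which is the $\eta$-Einstein form, and the constant relation derived above completes the statement. (One could equally reach the collinearity $Df=\epsilon(\xi f)\xi$ by applying $\eta$ to the boxed curvature formula and using $R(X,\xi)\xi=X-\eta(X)\xi$, but the trace computation above is the more self-contained route.)
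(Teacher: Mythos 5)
Your proof is correct, and it coincides with the paper's own argument up to the central identity: your operator form of \ref{pre:gradientEtaRicciSolitonEq} is exactly the paper's \ref{preposition1Rev2:formula1}, your commutator computation is its \ref{rev:r1:eq1}, and the cancellation you anticipate when setting $Y=\xi$ does occur (the $\epsilon Q\varphi X$ contributions from \ref{pre:kContactCovDiffX} and \ref{pre:kContactCovDiffXi} cancel, while the $\eta\otimes\xi$ part contributes $\mu\varphi X$), giving $R(X,\xi)Df=(2n+\mu)\varphi X-\epsilon\varphi QX$ as in the paper. After that you genuinely diverge. The paper takes the $\xi$-component: the right-hand side lies in the image of $\varphi$, hence is orthogonal to $\xi$, so the curvature symmetries together with $R(X,\xi)\xi=X-\eta(X)\xi$ give $df=(\xi(f))\eta$; then $0=d(df)$ combined with the contact condition $\eta\wedge d\eta\neq0$ forces $\xi(f)=0$, and the relation $-\epsilon\lambda-\mu=2n$ is extracted only at the very end from $Q\xi=2n\epsilon\xi$. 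You instead trace the identity over $X$: since $\varphi$ is skew-adjoint and $Q$ self-adjoint, $\trace\varphi=\trace(\varphi Q)=0$ (both computations remain valid with the $\epsilon_i$-weighted traces of a pseudo-orthonormal frame), so $\Ric(\xi,Df)=2n\epsilon\,\xi(f)=0$ at once; one covariant differentiation of $\xi(f)=0$ --- your displayed equation $\epsilon g(\varphi X,Df)+(2n+\epsilon\lambda+\mu)\eta(X)=0$ checks out --- then simultaneously yields the constant relation at $X=\xi$ and $\varphi Df=0$, whence $Df=0$ by \ref{pre:basicPro4} and $\eta(Df)=0$. Your route buys a purely tensorial proof that never invokes exterior calculus or the nondegeneracy $\eta\wedge d\eta\neq0$, and it produces $-\epsilon\lambda-\mu=2n$ as a mid-proof by-product; the paper's route first establishes the pointwise collinearity $Df=\epsilon(\xi(f))\xi$ and reuses the same $d$-based device that it deploys again for the theorem on potential fields colinear with $\xi$ and in the $(\kappa,\mu)$ setting, so it is stylistically uniform with the rest of the paper. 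Both arguments are complete proofs of the statement.
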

  \begin{proof}
    First  \ref{pre:gradientEtaRicciSolitonEq} gives
    \begin{equation}
      \nabla_XDf+QX+\lambda X+\epsilon\mu\eta(X)\xi = 0,\qquad\forall X\in\Gamma(M).\label{preposition1Rev2:formula1}
    \end{equation}
    Calculating $R(X, Y)Df$ by the above equation, we deduce
    \begin{equation}
      \begin{split}
        R(X, Y)Df&=\epsilon\mu(\nabla_Y\eta)X\xi+\epsilon\mu\eta(X)\nabla_Y\xi+(\nabla_YQ)X\\
        &-\epsilon\mu(\nabla_X\eta)Y\xi-\epsilon\mu\eta(Y)\nabla_X\xi-(\nabla_XQ)Y,\qquad\forall X, Y\in\Gamma(M).\label{rev:r1:eq1}
      \end{split}
    \end{equation}
    Substituting $\xi$ for $Y$ in the last equation and using lemma \oldref{pre:kContactCovDiffLemma}, we find
    \begin{equation*}
      R(X, \xi)Df= (\mu+2n)\varphi X-\epsilon\varphi QX,\qquad X\in\Gamma(M).
    \end{equation*}
    Scalar product of the above equation with $\xi$ gives $df=(\xi(f))\eta$, operating $d$ on this equation, we obtain $d\eta\wedge(\xi(f))+\eta\wedge d(\xi(f))=0$, taking exterior product of the last equation with $\eta$ and using $\eta\wedge d\eta\neq0$, we have $\xi(f)=0$, so $f$ is a constant function. Next using this consequence in \ref{preposition1Rev2:formula1}, we find $\Ric=-\lambda g-\mu\eta\otimes\eta$ and this gives $-\epsilon\lambda-\mu =2n$, completing the proof.
  \end{proof}

  One may ask, what will happen if the potential vector filed of an $\eta-$Ricci soliton on a contact pseudo-metric manifold $(M, \varphi, \xi, \eta, g)$ is $\xi$, we have answered this question in the following theorem.
  \begin{theorem}
    Let $(M, \varphi, \xi, \eta, g)$ be a contact pseudo-metric manifold, and let $(g, \varphi, \lambda, \mu)$ be an $\eta-$Ricci soliton on the manifold $M$. If $V$ is colinear with $\xi$ and $Q\varphi=\varphi Q$, then $M$ is an $\eta-$Einstein $K-$contact pseudo-metric manifold and $\Ric = -\lambda g-\mu\eta\otimes\eta$, where $-\epsilon\lambda-\mu=2n.$
  \end{theorem}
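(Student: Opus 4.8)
The plan is to write the potential field as $V=f\xi$ for a smooth function $f$ (this is what pointwise colinearity with the Reeb field means) and to extract from the soliton equation enough differential constraints on $f$ to force it to be constant. First I would compute $\mathsterling_{f\xi}g$ from $(\mathsterling_Vg)(X,Y)=g(\nabla_XV,Y)+g(\nabla_YV,X)$ together with $\nabla_XV=(Xf)\xi+f\nabla_X\xi$ and the contact identity $\nabla_X\xi=-\epsilon\varphi X-\varphi hX$. Because $g(\varphi X,Y)$ is skew-symmetric and $\varphi h$ is self-adjoint (the latter following from $h$ self-adjoint, $g(\varphi\cdot,\cdot)$ skew, and $h\varphi=-\varphi h$), the $\varphi$-terms cancel in pairs, leaving
\[(\mathsterling_{f\xi}g)(X,Y)=\epsilon(Xf)\eta(Y)+\epsilon(Yf)\eta(X)-2f\,g(\varphi hX,Y).\]
Substituting this into $\mathsterling_Vg+2\Ric+2\lambda g+2\mu\eta\otimes\eta=0$ gives
\[\Ric(X,Y)=-\lambda g(X,Y)-\mu\eta(X)\eta(Y)-\tfrac{\epsilon}{2}\big((Xf)\eta(Y)+(Yf)\eta(X)\big)+f\,g(\varphi hX,Y).\]

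Next I would use the hypothesis $Q\varphi=\varphi Q$, which is equivalent to $\Ric(\varphi X,Y)+\Ric(X,\varphi Y)=0$. Plugging the above formula for $\Ric$ into this and simplifying with the identities $\varphi h\varphi=h$ and $g(\varphi hX,\varphi Y)=g(hX,Y)$ (both consequences of $\varphi^{2}X=-X+\eta(X)\xi$, $\eta\circ h=0$ and $h\varphi=-\varphi h$), the $g$-terms and $\mu$-terms vanish while the two $\varphi h$-contributions add, producing
\[-\tfrac{\epsilon}{2}\big((\varphi X)f\,\eta(Y)+(\varphi Y)f\,\eta(X)\big)+2f\,g(hX,Y)=0.\]
Setting $Y=\xi$ and using $\varphi\xi=0$, $h\xi=0$, $\eta\circ h=0$ yields $(\varphi X)f=0$ for all $X$; hence $df$ annihilates $\ker\eta$, so $df=(\xi f)\eta$. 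Applying $d$ to this relation gives $0=d(\xi f)\wedge\eta+(\xi f)\,d\eta$, and taking the wedge product with $\eta$ leaves $(\xi f)\,\eta\wedge d\eta=0$. Since $M$ is contact, $\eta\wedge d\eta$ is nowhere zero, so $\xi f=0$ and therefore $df=0$: the function $f$ is constant.

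With $f$ constant---and nonzero, since $V=f\xi$ is not the zero field---the displayed relation becomes $f\,g(hX,Y)=0$, so $h=0$ and $M$ is $K$-contact. Moreover $df=0$ collapses the formula for $\Ric$ to $\Ric=-\lambda g-\mu\eta\otimes\eta$, so $M$ is $\eta$-Einstein. Finally, evaluating this at $(\xi,\xi)$ and comparing with $\Ric(\xi,\xi)=2n$---which holds on a $K$-contact manifold because $Q\xi=2n\epsilon\xi$ and $g(\xi,\xi)=\epsilon$---gives $-\epsilon\lambda-\mu=2n$. I expect the main obstacle to be the step that forces $f$ to be constant: one must first extract from $Q\varphi=\varphi Q$ that $df$ is proportional to $\eta$, and only afterwards invoke the non-integrability of the contact distribution ($\eta\wedge d\eta\neq0$) to kill the remaining $\xi$-component. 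Keeping careful track of the $\varphi h$ terms throughout---so that they survive as the single term $2f\,g(hX,Y)$ rather than cancelling or being mis-signed---is the delicate computational point on which the argument turns.
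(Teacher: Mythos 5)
Your proof is correct, and while it shares the paper's skeleton (write $V=f\xi$ with $f$ nowhere zero, force $f$ to be constant, then read off the Ricci form), the way you exploit the hypothesis $Q\varphi=\varphi Q$ differs genuinely from the paper at both key steps. The paper uses $Q\varphi=\varphi Q$ in the first stage only through the weak consequence $\varphi Q\xi=0$, which together with $\Ric(\xi,\xi)=2n-\trace h^2$ pins down $Q\xi$; substituting $Y=\xi$ and then $X=Y=\xi$ into the soliton equation then yields $Df=\epsilon(\xi(f))\xi$, and constancy of $f$ is proved by covariantly differentiating this relation and invoking the symmetry of $\nabla Df$ together with the nondegeneracy of $d\eta$ on vectors orthogonal to $\xi$. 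The $\eta$-Einstein form is then obtained by substituting $\varphi X$ into the operator form of the soliton equation and composing with $\varphi$, and $K$-contactness appears only at the very end, by back-substituting $\Ric=-\lambda g-\mu\eta\otimes\eta$ into the soliton equation to get $\mathsterling_\xi g=0$. You instead convert $Q\varphi=\varphi Q$ into the full bilinear identity $\Ric(\varphi X,Y)+\Ric(X,\varphi Y)=0$ applied to the soliton-derived expression for $\Ric$, and this single identity does double duty: at $Y=\xi$ it gives $df=(\xi(f))\eta$, and once $f$ is constant it collapses to $2f\,g(hX,Y)=0$, giving $h=0$ directly --- so you never need the trace formula $\Ric(\xi,\xi)=2n-\trace h^2$ nor the final back-substitution, and you establish $K$-contactness before, rather than after, the $\eta$-Einstein conclusion. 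Your constancy argument ($d^2f=0$, wedge with $\eta$, $\eta\wedge d\eta\neq0$) is exactly the mechanism the paper uses in its Proposition on gradient $\eta$-Ricci solitons on $K$-contact manifolds, though not in this theorem; it is if anything cleaner than the Hessian-symmetry route, with the minor remark that $\eta\wedge d\eta\neq0$ should be justified from the defining contact condition $\eta\wedge(d\eta)^n\neq0$. All of your intermediate identities ($\varphi h$ self-adjoint, $\varphi h\varphi=h$, $g(\varphi hX,\varphi Y)=g(hX,Y)$) check out, so the net effect is a shorter and somewhat more transparent proof of the same statement.
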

  \begin{proof}
    Let $V=f\xi$, where $f$ is a non-zero smooth function on the manifold $M$. Using this in \ref{pre:etaRicciSolitonEq}, we have
    \begin{equation}
      \epsilon X(f)\eta(Y)+\epsilon Y(F)\eta(X)-2fg(\varphi hX, Y)+2\Ric(X, Y)+2\lambda g(X, Y)+2\mu\eta(X)\eta(Y)=0,\label{th2:eq1}
    \end{equation}
    for $X, Y\in\Gamma(M)$. Substituting $\xi$ for $Y$ in \ref{th2:eq1}, we deduce
    \begin{equation}
      \epsilon Df+2Q\xi+(\xi(f)+2\lambda+2\epsilon\mu)\xi=0.\label{th2:eq2}
    \end{equation}
    By assumption $Q\varphi=\varphi Q$, so $\varphi Q\xi=0$, using this and \ref{pre:basicPro2}, we have $Q\xi=\epsilon(2n-trh^2)\xi$. Substituting this consequence in \ref{th2:eq2}, we find
    \begin{equation}
      \epsilon Df+(2\epsilon(2n-trh^2)+\xi(f)+2\lambda+2\mu\epsilon)\xi=0.\label{th2:eq3}
    \end{equation}
    Next, substituting $\xi$ for $X, Y$ in \ref{th2:eq1}, we obtain
    \begin{equation*}
      2n-trh^2=-\epsilon(\xi(f))-\lambda\epsilon-\mu.
    \end{equation*}
    The above equation and \ref{th2:eq3} give $Df=\epsilon(\xi(f))\xi$, differentiating this equation along an arbitrary vector field $X$ and using \ref{pre:basicPro3}, we find
    \begin{equation*}
      g(\nabla_X(Df), Y)=X(\xi(f))\eta(Y)-\epsilon\xi(f)\{g(\epsilon\varphi X, Y)+g(\varphi hX, Y)\},\quad\forall X, Y\in \Gamma(M).
    \end{equation*}
    Using the above equation, \ref{pre:basicPro1} and the known formula  $g(\nabla_X(Df), Y)=g(\nabla_Y(Df), X)$, where $X, Y\in\Gamma(M)$, we deduce
    \begin{equation*}
      X(\xi(f))\eta(Y)-Y(\xi(f))\eta(X)=-2\xi(f)d\eta(X, Y),\qquad\forall X, Y\in\Gamma(M).
    \end{equation*}
    Considering $X,Y$ as arbitrary orthogonal vector fields to $\xi$ in the above equation and noticing that $d\eta\neq0$, we deduce $X(f)=0$, so $f$ is a constant function on the manifold $M$. Using this consequence in \ref{th2:eq1} gives
    \begin{equation}
      -f\varphi hX+QX+\lambda X+\epsilon\mu\eta(X)\xi=0,\qquad\forall X\in\Gamma(M).\label{th2:eq4}
    \end{equation}
    Substituting $\varphi X$ for $X$ in the above equation, we find
    \begin{equation}
      -f\varphi h\varphi X+QX+\lambda X=0,\qquad\forall X\in\Gamma(M).\label{th2:eq5}
    \end{equation}
    Operating $\varphi$ on \ref{th2:eq4} and using $\varphi h=-h\varphi$, we have
    \begin{equation}
      f\varphi h\varphi X+QX+\lambda X=0,\qquad\forall X\in\Gamma(M).\label{th2:eq6}
    \end{equation}
    Using the above equation, \ref{th2:eq5}, \ref{pre:basicPro4} and $Q\xi=(-\lambda-\mu\epsilon)\xi$, we obtain:
    \begin{equation*}
      \Ric=-\mu\eta\otimes\eta-\lambda g.
    \end{equation*}
    Using the above equation in \ref{pre:etaRicciSolitonEq} gives $\mathsterling_\xi g=0$, so $M$ is a $K-$contact pseudo-metric manifold and $-\epsilon\lambda-\mu=2n$, completing the proof.
  \end{proof}
  \section{$\eta-$Ricci solitons on $(\kappa, \mu)-$contact pseudo-metric manifolds}\label{kappaMuSection}
  Studying nullity conditions on manifolds is one of the interesting topics in differential geometry, specially in the context of contact pseudo-metric manifolds.   In \cite{ghaffarzadeh}, Ghaffarzadeh and second author introduced the notion of a $(\kappa, \mu)-$contact pseudo-metric manifold. According to them a contact pseudo-metric manifold $(M, \varphi, \xi, \eta)$ is called a $(\kappa, \mu)-$contact pseudo-metric manifold if it satisfies
  \begin{equation}
    R(X, Y)\xi = \epsilon\kappa(\eta(Y)X-\eta(X)Y)+\epsilon\mu(\eta(Y)hX-\eta(X)hY),\label{km:rev01:1}
  \end{equation}
  where $R$ is the Riemannian curvature tensor of $M$, $\kappa, \mu$ are constant real numbers, and $X, Y$ are arbitrary vector fields. For a $(\kappa, \mu)-contact$ pseudo-metric manifold we have the following formulas \cite{ghaffarzadeh}
  \begin{gather}
    h^2=(\epsilon\kappa-1)\varphi^2,\label{km:basic1}\\
    Q\xi=2n\kappa\xi,\label{km:basic2}\\
    (\nabla_\xi h)=-\epsilon\mu\varphi h,\label{km:basic3}
  \end{gather}
  furthermore if  $\epsilon\kappa<1$ then we have \cite{ghaffarzadeh}
  \begin{gather}
    \begin{split}
      QX&=\epsilon[2(n-1)-n\mu]X+(2(n-1)+\mu)hX\\
      &+[2(1-n)\epsilon+2n\kappa+n\epsilon\mu]\eta(X)\xi, \label{km:condRicciTen}
    \end{split}\\
    r = 2n(\kappa-2\epsilon)+2n^2\epsilon(2-\mu),\label{km:condScalCurv}
  \end{gather}
  where $X$ and $r$ are, an arbitrary vector field and the scalar curvature of the manifold, respectively.

  \begin{lemma}
    Let $(M, \varphi, \xi, \eta, g)$ be a $(\kappa, \mu)-$contact pseudo-metric manifold, and let $\epsilon\kappa<1$. If $(g, V, \lambda, \tau)$ is a gradient $\eta-$Ricci soliton on the manifold $M$ then
    \begin{equation}
      \epsilon\kappa(-2+\mu)=n\mu+\mu+\tau\label{kmLemma1:1}.
    \end{equation}
  \end{lemma}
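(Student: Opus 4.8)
The plan is to recast the gradient $\eta$-Ricci soliton condition \ref{pre:gradientEtaRicciSolitonEq} (with the soliton constant written as $\tau$) into the operator identity
\[
\nabla_X Df = -QX-\lambda X-\epsilon\tau\,\eta(X)\xi,
\]
exactly as in \ref{preposition1Rev2:formula1}, and then feed in the explicit $(\kappa,\mu)$ Ricci operator \ref{km:condRicciTen}. Writing $QX=a_1X+a_2hX+a_3\eta(X)\xi$ with the coefficients read off from \ref{km:condRicciTen}, the soliton identity becomes $\nabla_XDf=-c_1X-c_2hX-c_3\eta(X)\xi$, where $c_2=2(n-1)+\mu$ and $c_3=2(1-n)\epsilon+2n\kappa+n\epsilon\mu+\epsilon\tau$; the value of $c_1$ will turn out to be irrelevant, since it drops out of every covariant derivative.

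First I would compute the curvature $R(X,\xi)Df$ from this identity. Setting $P(X):=\nabla_XDf$, the constant multiple of $X$ contributes nothing to $(\nabla_XP)(Y)-(\nabla_YP)(X)$, so only the $h$- and $\eta$-terms survive. Using $h\xi=0$ to get $(\nabla_Xh)\xi=-h\nabla_X\xi$, then \ref{pre:basicPro3}, the relation $h\varphi=-\varphi h$, and $h^2=(\epsilon\kappa-1)\varphi^2$ from \ref{km:basic1} to reduce $(\nabla_Xh)\xi=-\epsilon\varphi hX-(1-\epsilon\kappa)\varphi X$, together with $\nabla_\xi h=-\epsilon\mu\varphi h$ from \ref{km:basic3}, $\nabla_\xi\xi=0$, and $(\nabla_X\eta)\xi=(\nabla_\xi\eta)X=0$, I expect to arrive at
\[
R(X,\xi)Df=\bigl[c_3-\epsilon(\mu-1)c_2\bigr]\varphi hX+\bigl[(1-\epsilon\kappa)c_2+\epsilon c_3\bigr]\varphi X.
\]

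Next I would contract this against $\varphi e_i$ over a pseudo-orthonormal frame $\{e_i\}$ with $\varepsilon_i=g(e_i,e_i)$ and sum. Because $\sum_i\varepsilon_i g(\varphi he_i,\varphi e_i)=\trace h=0$ and $\sum_i\varepsilon_i g(\varphi e_i,\varphi e_i)=2n$, the left-hand side collapses to $2n\bigl[(1-\epsilon\kappa)c_2+\epsilon c_3\bigr]$. The same contraction is then computed independently through the curvature symmetries, $\sum_i\varepsilon_i g(R(e_i,\xi)Df,\varphi e_i)=-\sum_i\varepsilon_i g(R(Df,\varphi e_i)\xi,e_i)$, where substituting the nullity condition \ref{km:rev01:1} (and $\eta(\varphi e_i)=0$) leaves only pieces proportional to $\trace(\varphi h)=\trace(h\varphi)=0$, so this sum vanishes. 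Equating the two evaluations forces $(1-\epsilon\kappa)c_2+\epsilon c_3=0$, and inserting the explicit $c_2,c_3$ reduces this, after cancellation, precisely to \ref{kmLemma1:1}.

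The main obstacle is that the two computations of $R(X,\xi)Df$ are structurally mismatched: the soliton side naturally lives in the $\varphi X,\varphi hX$ directions, whereas the nullity side produces $X,hX,\xi$ terms, so a direct componentwise comparison seems to overdetermine the constants (one would spuriously obtain a second, quadratic-in-$\mu$ relation). The resolution is that the $\varphi$-trace annihilates exactly the $h$-dependent contributions on both sides via $\trace h=\trace(\varphi h)=0$, isolating the single scalar identity sought; the other delicate point is getting the reduction of $(\nabla_Xh)\xi$ correct through $h^2=(\epsilon\kappa-1)\varphi^2$.
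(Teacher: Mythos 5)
Your argument is correct, and I verified the key computations: with $c_2=2(n-1)+\mu$ and $c_3=2(1-n)\epsilon+2n\kappa+n\epsilon\mu+\epsilon\tau$ read off from \ref{km:condRicciTen}, one indeed gets $(\nabla_Xh)\xi=-\epsilon\varphi hX-(1-\epsilon\kappa)\varphi X$, hence $R(X,\xi)Df=[c_3-\epsilon(\mu-1)c_2]\varphi hX+[(1-\epsilon\kappa)c_2+\epsilon c_3]\varphi X$, and $(1-\epsilon\kappa)c_2+\epsilon c_3$ expands to $\mu+n\mu+\tau+2\epsilon\kappa-\epsilon\kappa\mu$, whose vanishing is exactly \ref{kmLemma1:1}. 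However, your finishing mechanism differs genuinely from the paper's. The paper keeps $Q$ abstract for as long as possible: it computes $R(X,Y)Df$ from the soliton equation (its \ref{rev:r1:eq1}), takes the scalar product with $\xi$ using only $(\nabla_XQ)\xi$ derived from $Q\xi=2n\kappa\xi$, and then exploits the pointwise fact $R(\varphi X,\varphi Y)\xi=0$ (obtained from \ref{km:rev01:1}) together with curvature skew-adjointness to extract the \emph{operator} identity $\epsilon(\varphi Q+Q\varphi)X-(\varphi Qh+hQ\varphi)X+(-4n\kappa\epsilon-2\tau)\varphi X=0$, into which \ref{km:condRicciTen} is substituted only at the very end; the terms in $h$ cancel via $h\varphi=-\varphi h$ pointwise, with no traces taken. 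You instead substitute the explicit Ricci operator at the outset, compute $R(X,\xi)Df$ directly from the Hessian identity (which requires the extra structure equations $(\nabla_Xh)\xi$ and \ref{km:basic3} that the paper never touches), and then collapse to a \emph{scalar} identity by a $\varphi$-trace, which is precisely what lets you sidestep the overdetermination you correctly identify: compared componentwise against the nullity side, the two expressions live in the $\{\varphi X,\varphi hX\}$ versus $\{X,hX,\xi\}$ directions, and indeed the paper harvests that extra pointwise information later, in the proof of Theorem \oldref{kmManth1}, where the relation $\kappa(\xi(f)\xi-\epsilon Df)-\epsilon\mu hDf=0$ is needed; your trace-only route proves the lemma but would have to be supplemented there. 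One small imprecision to repair in your write-up: on the nullity side the contraction produces \emph{two} terms, $\epsilon\kappa\,\eta(Df)\trace\varphi$ and $\epsilon\mu\,\eta(Df)\trace(h\varphi)$; you justify the vanishing only by $\trace(\varphi h)=\trace(h\varphi)=0$, so you should also invoke $\trace\varphi=0$, which is immediate from the skew-adjointness $g(\varphi X,Y)=-g(X,\varphi Y)$. With that added, your proof is complete and consistent with the paper's: substituting \ref{km:condRicciTen} into the paper's \ref{kmlm1:eq2} yields the same scalar relation you obtain from $(1-\epsilon\kappa)c_2+\epsilon c_3=0$.
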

  \begin{proof}
    Differentiating \ref{km:basic2} along an arbitrary vector field $X$ and using \ref{pre:basicPro3}, we deduce
    \begin{equation}
      (\nabla_X Q)\xi=Q(\epsilon\varphi+\varphi h)X-2n\kappa(\epsilon\varphi+\varphi h)X,\qquad\forall X\in\Gamma(M).\label{kmlm1:eq1}
    \end{equation}
    Taking scalar product of \ref{rev:r1:eq1} and $\xi$, and using \ref{kmlm1:eq1}, we have
    \begin{equation}
      \begin{split}
        g(R(X, Y)Df, \xi)&=\epsilon g((Q\varphi+\varphi Q)Y, X)+g((Q\varphi h+h\varphi Q)Y, X)\\
        &+(-4n\kappa\epsilon-2\tau)g(\varphi Y, X),\quad\forall X, Y\in\Gamma(M).\label{kmth:rev:01}
      \end{split}
    \end{equation}
    Substituting $\varphi X$ for $X$ and $\varphi Y$ for $Y$ in \ref{km:rev01:1} give $R(\varphi X, \varphi Y)\xi=0$, using this, \ref{pre:basicPro4} and  the above equation, we obtain
    \begin{equation}
      \epsilon(\varphi Q+Q\varphi)X-(\varphi Qh+hQ\varphi)X+(-4n\kappa\epsilon-2\tau)\varphi X=0,\label{kmlm1:eq2}
    \end{equation}
    where $X$ is an arbitrary vector field. Now, substituting $\varphi X$ for $X$ in \ref{km:condRicciTen}, we have
    \begin{equation}
      Q\varphi X=\epsilon[2(n-1)-n\mu]\varphi X+(2(n-1)+\mu)h\varphi X,\qquad \forall X\in\Gamma(M).\label{kmlm:eq3}
    \end{equation}
    Next, operating $\varphi$ on \ref{km:condRicciTen}, we obtain
    \begin{equation}
      \varphi Qx=\epsilon[2(n-1)-n\mu]\varphi X+(2(n-1)+\mu)\varphi hX,\qquad\forall X\in\Gamma(M).\label{kmlm:eq4}
    \end{equation}
    Substituting $hX$ for $X$ in \ref{kmlm:eq4}, using \ref{km:basic1} and \ref{pre:basicPro4} give
    \begin{equation}
      \varphi QhX=\epsilon[2(n-1)-n\mu]\varphi hX-(\epsilon\kappa-1)(2(n-1)+\mu)\varphi X,\qquad X\in\Gamma(M).
    \end{equation}
    Operating $h$ on \ref{kmlm:eq3}, using \ref{km:basic1} and \ref{pre:basicPro4}, we have
    \begin{equation}
      hQ\varphi X=\epsilon[2(n-1)-n\mu]h\varphi X-(\epsilon\kappa-1)(2(n-1)+\mu)\varphi X,\qquad X\in\Gamma(M).
    \end{equation}
    Using the last four equations in \ref{kmlm1:eq2} and  $h\varphi=-\varphi h$ give \ref{kmLemma1:1}, completing the proof.
  \end{proof}
  \begin{theorem}\label{kmManth1}
    Let $(M, \varphi, \xi, \eta, g)$ be a $(\kappa, \mu)-$contact pseudo-metric manifold and let $\epsilon\kappa<1$. If $(g, V, \lambda, \tau)$ is a gradient $\eta-$Ricci soliton on $M$, then $\mu=0, \tau=-2\epsilon\kappa$, or $\Ric=-\lambda g-\tau\eta\otimes\eta$ and $\mu=2-2n, \tau=2n(-\frac{1}{n}+n-\epsilon\kappa)$.
  \end{theorem}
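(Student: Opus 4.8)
The plan is to compute the curvature expression $R(X,\xi)Df$ in two independent ways and to compare them, with the relation \ref{kmLemma1:1} from the preceding lemma serving as the bridge that makes the two expressions match. First I would record the operator form of the soliton equation: from \ref{pre:gradientEtaRicciSolitonEq}, with $\tau$ playing the role of the soliton constant, one obtains
\[
\nabla_X Df = -QX - \lambda X - \epsilon\tau\eta(X)\xi, \qquad \forall X\in\Gamma(M).
\]

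For the first computation I would differentiate this identity and feed it into $R(X,Y)Df=\nabla_X\nabla_Y Df-\nabla_Y\nabla_X Df-\nabla_{[X,Y]}Df$, exactly as in \ref{rev:r1:eq1} but with $\tau$ in place of the soliton constant. Substituting $Y=\xi$ and using \ref{pre:basicPro3}, the $\eta$-terms collapse because $\nabla_\xi\xi=0$ and $(\nabla_X\eta)(\xi)=(\nabla_\xi\eta)(X)=0$, leaving $R(X,\xi)Df=(\nabla_\xi Q)X-(\nabla_X Q)\xi+\tau\varphi X+\epsilon\tau\varphi hX$. Here $(\nabla_X Q)\xi$ is supplied by \ref{kmlm1:eq1}, while $(\nabla_\xi Q)X$ I would compute directly from \ref{km:condRicciTen} and \ref{km:basic3}, expecting $(\nabla_\xi Q)X=-\epsilon\mu(2(n-1)+\mu)\varphi hX$. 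Collecting the $\varphi X$- and $\varphi hX$-coefficients and substituting \ref{kmLemma1:1} to annihilate the $\varphi X$-part, I expect the clean identity
\[
R(X,\xi)Df=(2n-2+\mu)(\kappa-\epsilon\mu)\,\varphi hX, \qquad \forall X\in\Gamma(M).
\]
It is worth noting that it is precisely \ref{kmLemma1:1} that forces the $\varphi X$-coefficient to vanish.

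For the second computation I would use the symmetries of $R$ to write $g(R(X,\xi)Df,Y)=-g(R(Df,Y)\xi,X)$ and expand $R(Df,Y)\xi$ with the nullity condition \ref{km:rev01:1}; for $X,Y$ orthogonal to $\xi$ this yields $g(R(X,\xi)Df,Y)=\xi(f)\bigl(\kappa g(X,Y)+\mu g(hX,Y)\bigr)$. Equating with the first expression gives, on $\xi^\perp$,
\[
(2n-2+\mu)(\kappa-\epsilon\mu)\,g(\varphi hX,Y)=\xi(f)\bigl(\kappa g(X,Y)+\mu g(hX,Y)\bigr).
\]
Since $\epsilon\kappa<1$, equation \ref{km:basic1} shows $h$ has eigenvalues $\pm\sqrt{1-\epsilon\kappa}\neq0$ on $\xi^\perp$, and $h\varphi=-\varphi h$ interchanges the two eigenspaces; inserting an eigenvector $X$ of $h$ together with $Y=\varphi X$ (using that $X$ and $\varphi X$ lie in distinct eigenspaces and are therefore independent) I expect to read off both $(2n-2+\mu)(\kappa-\epsilon\mu)=0$ and $\xi(f)\kappa=\xi(f)\mu=0$. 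The factorisation then splits into the two advertised cases: if $\mu=2-2n$ the Ricci operator \ref{km:condRicciTen} loses its $h$-part and becomes $\eta$-Einstein; since then $\xi(f)=0$, the function $g(Df,\xi)$ vanishes identically, so differentiating it along $\xi$ and using $\nabla_\xi\xi=0$ forces $Df$ to be parallel and hence $\Ric=-\lambda g-\tau\eta\otimes\eta$, with $\tau$ determined by \ref{kmLemma1:1}; otherwise $\kappa=\epsilon\mu$, and a short argument through $\xi(f)$ and the eigenvalues of $h$ excludes every value of $\mu$ but $\mu=0$, which gives $\tau=-2\epsilon\kappa$ from \ref{kmLemma1:1}.

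The main obstacle I anticipate is the two-fold evaluation of $R(X,\xi)Df$ and its reconciliation — in particular the honest computation of $(\nabla_\xi Q)X$ from \ref{km:condRicciTen} and \ref{km:basic3}, and the fact that in the pseudo-Riemannian setting eigenvectors of $h$ may be null, which is why I would argue through the linear independence of $X$ and $\varphi X$ rather than through $g(X,X)\neq0$. The concircular step that upgrades $\xi(f)=0$ to ``$Df$ parallel'' in the case $\mu=2-2n$ is the other point that demands care.
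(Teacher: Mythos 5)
Your first half is correct, and it genuinely differs from the paper. I checked the algebra: with the soliton equation in the form $\nabla_XDf=-QX-\lambda X-\epsilon\tau\eta(X)\xi$, one indeed gets $R(X,\xi)Df=(\nabla_\xi Q)X-(\nabla_XQ)\xi+\tau\varphi X+\epsilon\tau\varphi hX$, the derivative $(\nabla_\xi Q)X=-\epsilon\mu(2(n-1)+\mu)\varphi hX$ follows from \ref{km:condRicciTen} and \ref{km:basic3}, and \ref{kmLemma1:1} annihilates exactly the $\varphi X$-coefficient, leaving $R(X,\xi)Df=(2n-2+\mu)(\kappa-\epsilon\mu)\varphi hX$. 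Your eigenspace comparison on $\xi^\perp$ is also sound: since $h^2=(1-\epsilon\kappa)\,\mathrm{id}$ on $\xi^\perp$ with $1-\epsilon\kappa>0$, the operator $h$ is diagonalizable with eigenvalues $\pm\sqrt{1-\epsilon\kappa}\neq0$ regardless of null vectors, $g$ is nondegenerate on $\xi^\perp$, and the fact that $\varphi h$ maps each eigenspace to the other while $\kappa\,\mathrm{id}+\mu h$ preserves them yields $(2n-2+\mu)(\kappa-\epsilon\mu)=0$ and $\xi(f)\kappa=\xi(f)\mu=0$. The paper never computes the tangential part of $R(X,\xi)Df$ at all: it works with the $\xi$-component \ref{kmth:rev:01}, substitutes $X=\xi$ to get $\kappa(\xi(f)\xi-\epsilon Df)-\epsilon\mu hDf=0$, differentiates along $\xi$ using \ref{km:basic3} to extract $\mu^2hDf=0$, and hence for $\mu\neq0$ obtains $Df=\eta(Df)\xi$.

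The endgame of your proposal has a genuine gap, and it stems from restricting the comparison to $X,Y\perp\xi$. In the case $\mu\neq0$ your information is $\xi(f)=0$, i.e.\ $Df$ has \emph{no} $\xi$-component --- the complementary (and for your purposes weaker) statement to the paper's $Df=\eta(Df)\xi$. The claimed upgrade ``differentiating $g(Df,\xi)=0$ along $\xi$ and using $\nabla_\xi\xi=0$ forces $Df$ to be parallel'' is false: differentiating along $\xi$ gives only the single component $\Hess f(\xi,\xi)=0$, while differentiating along arbitrary $X$ gives $\Hess f(X,\xi)=-g(Df,\nabla_X\xi)$, i.e.\ $(\epsilon\,\mathrm{id}+h)\varphi Df=0$, which kills $\varphi Df$ (and then $Df$, via $\xi(f)=0$) only when $-\epsilon$ is not an eigenvalue of $h$, i.e.\ when $\kappa\neq0$; the subcase $\kappa=0$, $\mu=2-2n$ survives your argument untouched. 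Likewise, the branch $\kappa=\epsilon\mu$ cannot be dismissed by a ``short argument through $\xi(f)$'': once $f$ is shown constant it leads to $\mu=2-2n$ (the second alternative, both factors vanishing simultaneously), not to $\mu=0$. The missing ingredient is precisely the component you discarded: taking $Y=\xi$ in your own two-fold evaluation reproduces $(\kappa\,\mathrm{id}+\mu h)(Df-\epsilon\xi(f)\xi)=0$, equivalently the paper's equation above, and it is this relation --- after the paper's $\xi$-differentiation yielding $\mu^2hDf=0$ --- that gives $Df=\eta(Df)\xi$ for $\mu\neq0$, whence $\xi(f)=0$ (via the $d\eta\neq0$ argument) forces $Df=0$, so $\Hess f=0$ and $\Ric=-\lambda g-\tau\eta\otimes\eta$; only then do \ref{km:condScalCurv} and \ref{kmLemma1:1} pin down $\mu=2-2n$ and $\tau=2n(-\frac{1}{n}+n-\epsilon\kappa)$. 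Note the inversion of logic: in the paper $\mu=2-2n$ is a consequence of the Ricci identity, whereas you take it from the factorization and then still owe the Ricci identity, which is exactly the unproved step. (As an aside, your factorization is an interesting extra necessary condition the paper does not record: for $\mu=0$ and $n\geq2$ it would even force $\kappa=0$.)
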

  \begin{proof}
    First, substituting $\xi$ for $X$ in \ref{kmth:rev:01}, using \ref{km:rev01:1} and \ref{km:basic2}, we have
    \begin{equation*}
      \kappa(\xi(f)\xi-\epsilon Df)-\epsilon\mu hDf=0.
    \end{equation*}
    Differentiating the above equation along vector field $\xi$ and using \ref{km:basic3}, we have
    \begin{equation*}
      \kappa\xi(\xi(f))\xi+\epsilon\kappa(2n\kappa+\lambda+\tau\epsilon)\xi+(\epsilon\mu)^2\varphi h Df=0.
    \end{equation*}
    Now, operating $\varphi$ on the last equation we find $\mu^2h Df=0$, taking $h$ from this and using \ref{km:basic1}, we obtain
    \begin{equation*}
      \mu^2(\epsilon\kappa-1)(-Df+\eta(Df)\xi)=0.
    \end{equation*}
    Examining the above equation we have either, i)$\mu=0$ or ii)$\mu\neq0$.

    In the case i), using \ref{kmLemma1:1}, we obtain $\tau=-2\epsilon\kappa$.
    In the case ii), we have $Df=\eta(Df)\xi$, differentiating this along arbitrary vector field $X$ and using \ref{pre:basicPro3}, we have
    \begin{equation*}
      g(\nabla_X Df, Y) = X(\xi(f))\eta(Y)-\xi(f)g(\varphi X, Y)-\epsilon\xi(f)f(\varphi hX, Y),
    \end{equation*}
    where $X, Y$ are arbitrary vector fields. Using the above equation and $g(\nabla_X Df, Y)=g(\nabla_Y Df, X)$, we find
    \begin{equation*}
      X(\xi(f))\eta(Y)-Y(\xi(f))\eta(X)+2\xi(f)d\eta(X, Y)=0,\qquad\forall X, Y\in\Gamma(M).
    \end{equation*}
    Substituting $\varphi X$ for $X$ and $\varphi Y$ for $Y$, and noticing the fact that $d\eta\neq0$, we have $\xi(f)=0$. So $f$ is a constant function and $\Ric=-\lambda g-\tau\eta\otimes\eta$. Using this gives $r=(2n+1)(-\lambda)-\epsilon\tau$, comparing the last consequence and \ref{km:condScalCurv}, we have
    \begin{equation*}
      n\mu=-2+2n-2n\kappa\epsilon-\tau.
    \end{equation*}
    Now using the above equation and \ref{kmLemma1:1} we obtain, $\mu=2-2n$ and $\tau=2n(-\frac{1}{n}+n-\epsilon\kappa)$, completing the proof.
  \end{proof}
  \begin{corollary}
    Let $(M, \varphi, \xi, \eta, g)$ be a $(\kappa, \mu)-$contact pseudo-metric manifold and let $\epsilon\kappa<1$. If $(g, V, \lambda, 0)$ is a gradient $\eta-$Ricci soliton (in fact a gradient Ricci soliton) on $M$, then $R(X, Y)\xi=0$, where $X, Y$ are arbitrary vector fields.
  \end{corollary}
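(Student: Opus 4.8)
The plan is to obtain the corollary as an immediate specialization of Theorem~\oldref{kmManth1}. Since $(g,V,\lambda,0)$ is a gradient $\eta$-Ricci soliton, Theorem~\oldref{kmManth1} applies with $\tau=0$ and leaves exactly two alternatives; I would show that each of them forces $\kappa=\mu=0$, and then substitute these values into the defining relation~\ref{km:rev01:1}. In the first alternative the theorem gives $\mu=0$ and $\tau=-2\epsilon\kappa$; setting $\tau=0$ and using $\epsilon=\pm1\neq0$ yields $\kappa=0$ at once, so $\kappa=\mu=0$.

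In the second alternative the theorem gives $\mu=2-2n$ together with $\tau=2n(-\tfrac1n+n-\epsilon\kappa)$. Imposing $\tau=0$ and dividing by $2n$ produces $\epsilon\kappa=n-\tfrac1n$. At this point the standing hypothesis $\epsilon\kappa<1$ becomes decisive: it forces $n-\tfrac1n<1$, equivalently $n^2-n-1<0$, and since $n$ is a positive integer (the manifold being $(2n+1)$-dimensional) the only admissible value is $n=1$. For $n=1$ one then reads off $\epsilon\kappa=0$, hence $\kappa=0$, while $\mu=2-2n=0$; so again $\kappa=\mu=0$.

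With $\kappa=\mu=0$ secured in every case, the defining equation~\ref{km:rev01:1} of a $(\kappa,\mu)$-contact pseudo-metric manifold collapses to $R(X,Y)\xi=0$ for all vector fields $X,Y$, which is precisely the assertion. The only step that is more than a routine substitution is the handling of the second alternative: the point is to exploit the \emph{strict} inequality $\epsilon\kappa<1$ together with the integrality of $n$ to eliminate all dimensions except $n=1$, and then to verify that this surviving case still yields $\kappa=\mu=0$ rather than a nonzero value of $\kappa$. I expect this to be the main (and essentially the only) obstacle.
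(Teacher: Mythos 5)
Your proposal is correct and is precisely the argument the paper intends: the corollary is stated as an immediate specialization of Theorem~\oldref{kmManth1} to $\tau=0$, with the first branch giving $\kappa=\mu=0$ directly and the second branch eliminated down to $n=1$ (hence again $\kappa=\mu=0$) via the strict inequality $\epsilon\kappa<1$, after which \ref{km:rev01:1} yields $R(X,Y)\xi=0$. The paper leaves this routine case analysis implicit, and your write-up supplies it correctly, including the integrality argument $n^2-n-1<0\Rightarrow n=1$ in the second branch.
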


\end{document}